\DeclareSymbolFont{cyrletters}{OT2}{wncyr}{m}{n}
\numberwithin{equation}{section} \numberwithin{figure}{section}
\DeclareSymbolFont{cyrletters}{OT2}{wncyr}{m}{n}
\DeclareMathSymbol{\Sha}{\mathalpha}{cyrletters}{"58}
\DeclareMathSymbol{\Be}{\mathalpha}{cyrletters}{"42}
\newcommand\F{\mathbb{F}}
\renewcommand\P{\mathbb{P}}
\newcommand\Z{\mathbb{Z}}
\newcommand\N{\mathbb{N}}
\newcommand\Q{\mathbb{Q}}
\newcommand\R{\mathbb{R}}
\renewcommand{\b}{\mathbf}
\renewcommand{\gcd}{\textrm{gcd}} 
\renewcommand{\leq}{\leqslant}
\renewcommand{\geq}{\geqslant}
\renewcommand{\#}{\sharp}
\newtheorem{lemma}{Lemma}
\newtheorem{theorem}[lemma]{Theorem}
\newtheorem{proposition}[lemma]{Proposition}
\newtheorem{corollary}[lemma]{Corollary}
\theoremstyle{definition}
\newtheorem{remark}[lemma]{Remark}
\numberwithin{lemma}{section}
\title{Local solubility of a family of ternary conics over a biprojective base I}
\author{Cameron Wilson}
\date{\today}
\address{Department of Mathematics \\
University of Glasgow \\ G12~8QQ United Kingdom}
\email{c.wilson.6@research.gla.ac.uk}
\subjclass[2020] {
11G99, 
11N36, 
\bf{14G05}} 
\date{\today}
\begin{document}

\begin{abstract}
    Let $f,g\in\Z[u_1,u_2]$ be binary quadratic forms. We provide upper bounds for the number of rational points $(u,v)\in\P^1(\Q)\times\P^1(\Q)$ such that the ternary conic
    \[
    X_{(u,v)}: f(u_1,u_2)x^2 + g(v_1,v_2)y^2 = z^2
    \]
    has a rational point. We also give some conditions under which lower bounds exist.
\end{abstract}

\maketitle

\tableofcontents

\section{Introduction}
Let $f,g\in\Z[u_1,u_2]$ be binary quadratic forms which have non-zero discriminant. This paper is concerned with the local solubility of the ternary conics
\begin{equation}\label{ternaryfibres}
    X_{(u,v)}:f(u_1,u_2)x^2+g(v_1,v_2)y^2=z^2
\end{equation}
for $(u,v)=([u_1:u_2],[v_1:v_2])\in\P^1(\Q)\times\P^1(\Q)$, where $(u_1,u_2),(v_1,v_2)\in\Z_{prim}^2$. In particular, we aim to provide upper and lower bounds for the counting problem
\begin{equation}\label{ac-count}
    N_{f,g}(B) = \sharp\{(u,v)\in\P^1(\Q)\times\P^1(\Q): X_{(u,v)}(\Q)\neq\emptyset, \; H((u,v))\leq B\}
\end{equation}
where here and throughout, $H((u,v)) = \max\{|u_1|,|u_2|\}^2\cdot\max\{|v_1|,|v_2|\}^2$ denotes the anticanonical height on $\P^1(\Q)\times\P^1(\Q)$. Formally we view the ternary conics $X_{(u,v)}$ as the rational fibres of the map $\pi:X\rightarrow \P^1\times\P^1$ where
\[
X : f(u_1,u_2)x^2+g(v_1,v_2)y^2=z^2 \subseteq \P^2\times(\P^1\times\P^1)
\]
and $\pi((x,(u,v)))=(u,v)$ for all $(x,(u,v))\in\P^2\times(\P^1\times\P^1)$. Our main result is the following.

\begin{theorem}\label{acheightbound}
    Let $B\geq 2$ and let $f,g\in\Z[u_1,u_2]$ be binary quadratic forms with non-zero discriminant. Then
    \begin{align*}
    N_{f,g}(B) \ll \begin{cases}
    B \;&\text{if at least one of $f$ and $g$ split over $\Q$},\\
    B\log\log B\;&\text{if neither $f$ nor $g$ split over $\Q$}.
    \end{cases}
    \end{align*}
    where the implied constant depends at most on $f$ and $g$.
\end{theorem}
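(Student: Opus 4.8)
The plan is to bound $N_{f,g}(B)$ by the number of everywhere locally soluble fibres; since a smooth conic satisfies the Hasse principle this is in fact an equality, though for an upper bound only the trivial implication is needed. Writing $a=f(u_1,u_2)$ and $b=g(v_1,v_2)\in\Z$ for the values at primitive representatives, $X_{(u,v)}$ is everywhere locally soluble precisely when all Hilbert symbols $(a,b)_w$ are trivial, which is automatic at every odd prime $p\nmid ab$. Keeping only the odd primes dividing $ab$, the decisive consequences are: if $\ord_p a$ is odd then $p\mid b$ or $b$ is a square modulo $p$, and symmetrically if $\ord_p b$ is odd. Hence, writing $r(u)$ for the product of the odd primes dividing $f(u)$ to odd order and $s(v)$ for the analogue attached to $g(v)$, local solubility forces $g(v)$ to be a square modulo every prime dividing $r(u)$ and $f(u)$ to be a square modulo every prime dividing $s(v)$; so, setting $H(u)=\max\{|u_1|,|u_2|\}^2$ and similarly $H(v)$ (so $H((u,v))=H(u)H(v)$), $N_{f,g}(B)$ is at most the number of $(u,v)$ with $H(u)H(v)\leq B$ meeting both families of congruences. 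It is essential to keep both families: using only the conditions coming from $r(u)$ is insufficient, essentially because it cannot treat the pairs with $H(u)$ close to $B$, and leaves a bound of the trivial order $B\log B$.

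I would then bring in two analytic inputs. The first is a sieve/lattice-point estimate: for fixed $u$, a squarefree product $q$ of primes dividing $r(u)$, and primitive $(v_1,v_2)$ in a box of side $\asymp Y^{1/2}$, the number with $g(v)$ a square modulo $q$ is $\ll Y\prod_{p\mid q}\bigl(\tfrac12+O(p^{-1})\bigr)+O(q^2)$, which for $q\leq Y^{1/2-\varepsilon}$ is $\ll Y\,2^{-\omega(q)}\prod_{p\mid q}\bigl(1+O(p^{-1})\bigr)$; imposing in addition the $s(v)$-conditions forces the odd-order prime factors of $g(v)$ into a set of primes of density $\tfrac12$ depending on $f(u)$, costing a further factor $\ll(\log Y)^{-\kappa_g}$, and there is a dual estimate with $u$ and $v$ exchanged. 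The second input, of Selberg--Delange/Wirsing type, is
\[
\sum_{u\in\P^1(\Q),\ H(u)\leq T}2^{-\omega(f(u))}\ \asymp\ \frac{T}{(\log T)^{\kappa_f}},\qquad\kappa_f=\begin{cases}\ \tfrac12,& f\ \text{irreducible over}\ \Q,\\[2pt]\ 1,& f\ \text{split over}\ \Q,\end{cases}
\]
the exponent coming from the local density $1-p^{-1}$ at a split prime and the fact that the split primes have density $1$ in the reducible case and $\tfrac12$ in the irreducible one (the auxiliary factors $\prod_{p\mid q}(1+O(p^{-1}))$ merge harmlessly into these densities). Expanding the congruence conditions into sums of Legendre symbols, the main term — obtained by replacing each congruence condition by its density, essentially $\tfrac12$ — is $\asymp\sum_{H(u)H(v)\leq B}2^{-\omega(f(u))-\omega(g(v))}$; evaluating this dyadically in $H(u)\asymp X$, $H(v)\asymp Y$ and using $\sum_{X\ \mathrm{dyadic}\leq B}(\log X)^{-\kappa_f}(\log(B/X))^{-\kappa_g}\asymp(\log B)^{1-\kappa_f-\kappa_g}\!\int_0^1 s^{-\kappa_f}(1-s)^{-\kappa_g}\,ds$ (a Beta integral, convergent for $\kappa_f,\kappa_g<1$), it is $\asymp B$ when $\kappa_f=\kappa_g=\tfrac12$ and $o(B)$ as soon as one of $f,g$ splits.

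The remaining task is to show the off-diagonal contributions — the Legendre-symbol sums $\sum_{u,v}\prod_{p\mid d}\bigl(\tfrac{g(v)}{p}\bigr)\prod_{q\mid e}\bigl(\tfrac{f(u)}{q}\bigr)(\cdots)$ with $d\mid r(u)$, $e\mid s(v)$ not both $1$ — are of smaller order. When $H(u)\leq B^{1/3-\varepsilon}$ the modulus $d$ is small relative to the $v$-box, so the sum over $v$ of $\prod_{p\mid d}\bigl(\tfrac{g(v)}{p}\bigr)$ exhibits cancellation (Pólya--Vinogradov, or a large sieve, for the real character modulo $d$ composed with $g$), and these terms are negligible; symmetrically for $H(v)\leq B^{1/3-\varepsilon}$. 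The genuinely hard range is $B^{1/3-\varepsilon}\leq H(u),H(v)\leq B^{2/3+\varepsilon}$ — the extreme case being $H(u)\asymp H(v)\asymp\sqrt B$ — where every available modulus is comparable to the opposite box side and no cancellation can be extracted from either variable alone. There the surviving obstruction comes only from pairs $(u,v)$ in which the odd-order parts of both $f(u)$ and $g(v)$ have \emph{no} prime factor below the sieving threshold; an upper-bound sieve bounds the number of such $u$ with $H(u)\asymp X$ by $\ll X(\log X)^{-1/2}$ (and by $\ll X(\log X)^{-2}$ if $f$ splits, since then $f(u)$ is forced to be a product of two primes), so their total contribution is $\ll B$ (and $o(B)$ if a form splits), while every other pair still yields a power saving from one variable and a constant-factor saving from the other. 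Combining this with the easy ranges and the diagonal estimate gives $N_{f,g}(B)\ll B$ when at least one of $f,g$ splits, and $N_{f,g}(B)\ll B\log\log B$ otherwise, the residual $\log\log B$ being a sieve loss at this critical scale.

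The main obstacle throughout is the genuinely two-dimensional nature of the problem: there is no reduction to a one-parameter count, the lattice-point sieve and the character-sum cancellation both break down exactly when $H(u)\asymp H(v)\asymp\sqrt B$ (modulus matching box side), and the crux is to control, uniformly in $X$, how often a value of a binary quadratic form fails to have enough small prime factors of odd multiplicity — which is also what produces the split/non-split dichotomy in the statement.
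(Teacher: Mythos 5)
Your plan is genuinely different from the paper's proof: where the paper sieves directly with explicit local-obstruction sets $\Omega_p\subseteq(\Z/p^2\Z)^4$ and an off-the-shelf large sieve for skewed boxes, you want to expand Hilbert-symbol conditions into Legendre-symbol sums, estimate a main term of Selberg--Delange type, and control off-diagonal terms by character-sum cancellation. This is closer to a programme for an asymptotic than for an upper bound, and it creates problems the paper simply never has to face.

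The most serious gap is the factor $\log\log B$. In the paper it has a transparent and unavoidable source: the large sieve over a box $[T_1,2T_1]\times\dots\times[S_2,2S_2]$ saves a factor $\asymp(\log R)^{\Delta(\pi)}$ with $R=\min(T_1,T_2,S_1,S_2)$, so a box with one very short side gives essentially no saving; summing the dyadic decomposition yields $B\sum_{j\leq\log\sqrt B}(1+j^{\Delta(\pi)})^{-1}$, which is $\asymp B\log\log B$ precisely when $\Delta(\pi)=1$. Your sketch never isolates this mechanism. You declare the hard range to be $H(u)\asymp H(v)\asymp\sqrt B$, but there a saving of order $1/\log B$ is actually available and the contribution is $o(B)$; the genuine loss in the paper's method comes from the ``thin'' boxes with one tiny side, which your proposal does not engage with. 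At the end you attribute the $\log\log B$ to ``a sieve loss at this critical scale'' without deriving it, and your own heuristic for the main term ($B\sum_j j^{-\kappa_f}(M-j)^{-\kappa_g}$ with $\kappa_f=\kappa_g=\tfrac12$ and $M\asymp\log B$) produces $\asymp B$, not $B\log\log B$.

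Two further issues. For the off-diagonal terms you invoke P\'olya--Vinogradov or a large sieve, but once $H(u)\gg B^{1/3}$ the modulus $d\mid r(u)$ can exceed the side $\asymp(B/H(u))^{1/2}$ of the $v$-box, so no cancellation is available from one variable and your fallback --- counting $u$ with $f(u)$ rough below a threshold --- is only sketched and, on inspection, yields $O(B/\log B)$ from those pairs rather than an identification of the $\log\log B$ term. Separately, your Beta integral $\int_0^1 s^{-\kappa_f}(1-s)^{-\kappa_g}\,ds$ diverges when a $\kappa$ equals $1$, i.e.\ exactly in the split case where you want to conclude $o(B)$; the conclusion is still true but requires redoing the dyadic sum to account for the logarithmic divergence rather than quoting the Beta integral.

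By contrast the paper's argument is short and self-contained: compute $|\Omega_p|=\eta_p p^7+O(p^6)$ by Gauss sums (Corollary~\ref{keylocallemma}), show via Wirsing that $\sum_{m\leq L}\mu^2(m)\prod_{p\mid m}|\Omega_p|/(p^8-|\Omega_p|)\asymp(\log L)^{\Delta(\pi)}$ (Lemma~\ref{Savinglemma}), apply the large sieve of Lemma~\ref{LARGEsieve} with $s=2$, $r=4$, and sum over dyadic boxes. Your character-sum route is better matched to the companion problem of asymptotics outside the thin sets (Remark~\ref{partIIteaser}), where an upper-bound sieve is no longer enough; as a proof of Theorem~\ref{acheightbound} it has the gaps above.
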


We will see in \S \ref{Discussion} that these upper bounds are larger than what is expected for this problem. It is natural to ask whether these bounds are sharp. In \cite{BLS} an abundance of everywhere locally soluble fibres was shown by demonstrating the presence of a thin set. We will also examine the following thin sets which occur in our problem:
\[
N_{\textrm{thin},1} = \{(u,v)\in\P^1(\Q)\times\P^1(\Q):f(u_1,u_2)=\square\;\text{or}\;g(v_1,v_2)=\square\},
\]
and
\[
N_{\textrm{thin},2} = \{(u,v)\in\P^1(\Q)\times\P^1(\Q): f(u_1,u_2)g(v_1,v_2)=\square\}.
\]
The fibres of points inside each of these sets have a rational point which is easily identified. Our second theorem studies the arithmetic of these thin sets with respect to the anticanonical height. To this purpose, define
\[
N_{\textrm{thin},i}(B) = \#\{(u,v)\in N_{\textrm{thin},i}: H((u,v))\leq B\}.
\]

\begin{theorem}\label{thinsetbound}
We have the following bounds,
\begin{enumerate}
\item $N_{\textrm{thin},1}(B) = 0$ if neither $f$ nor $g$ represents a square;
\item $B \ll N_{\textrm{thin},1}(B) \ll B$ if at least one of $f$ or $g$ represents a square;
\item $N_{\textrm{thin},2}(B)\ll B^{2/3}(\log B)^9$. 
\end{enumerate}
\end{theorem}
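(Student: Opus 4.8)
The plan is to treat the three statements separately, with the genuinely substantive work lying in part (3). For part (1): if $(u,v)\in N_{\textrm{thin},1}$ then $f(u_1,u_2)=z^2$ for some rational $z$, say, which immediately exhibits $f$ as representing a square in the sense that there exists $(u_1,u_2)\in\Z^2_{prim}$ (or rationally, which amounts to the same thing after clearing denominators) with $f(u_1,u_2)$ a square; hence if neither $f$ nor $g$ represents a square the set is empty and the count is $0$.

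For part (2), suppose without loss of generality that $f$ represents a square, so there is a fixed primitive $(a_1,a_2)$ with $f(a_1,a_2)=c^2$. Then for the upper bound I would argue that the condition ``$f(u_1,u_2)$ is a square'' cuts out a thin subset of $\P^1$, indeed the image of a double cover $\P^1\to\P^1$ (after resolving, the conic $f(u_1,u_2)=w^2$ in weighted coordinates is a $\P^1$), so the number of $u$ with $\max\{|u_1|,|u_2|\}^2\le T$ and $f(u_1,u_2)=\square$ is $O(\sqrt{T})$ by a standard result on counting points on $\P^1$ via a parametrisation of degree $2$ (or by elementary estimates: $f(u_1,u_2)=w^2$ has $O(\sqrt{T})$ integral primitive solutions in a box of side $\sqrt{T}$). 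Summing over $v\in\P^1(\Q)$ with $\max\{|v_1|,|v_2|\}^2 \le B/T$ — of which there are $O(B/T)$ — and then over dyadic $T$ gives $\sum_{T \text{ dyadic}} \sqrt T \cdot (B/T) \ll B \sum_T T^{-1/2} \ll B$, using that the smallest $T$ appearing is bounded below by a constant. For the matching lower bound, fix one primitive solution $(a_1,a_2)$ of $f(u_1,u_2)=\square$ and let $v$ range over all of $\P^1(\Q)$ with $\max\{|v_1|,|v_2|\}^2\le B/\max\{|a_1|,|a_2|\}^2$: these all lie in $N_{\textrm{thin},1}$ and there are $\gg B$ of them.

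For part (3), the count is the number of $(u,v)$ with $f(u_1,u_2)g(v_1,v_2)$ a square and $\max|u|^2\max|v|^2\le B$. I would write $f(u_1,u_2)=d_1 m_1^2 t_1$ and $g(v_1,v_2)=d_2 m_2^2 t_2$ with $t_1,t_2$ squarefree, so the squareness condition forces $d_1 t_1 = d_2 t_2$ up to squares, i.e.\ essentially $t_1=t_2=:t$ after absorbing the constants $d_1,d_2$ coming from the content/leading behaviour of $f,g$. So for each squarefree $t$ I need the number of $u$ with $f(u_1,u_2)=t\cdot(\text{square})$ and the number of $v$ with $g(v_1,v_2)=t\cdot(\text{square})$, and I multiply and sum over $t$. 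The key input is an estimate of the form: the number of primitive $(u_1,u_2)$ in a box of side $U:=B_1^{1/2}$ with $f(u_1,u_2)=t\square$ is $\ll_\varepsilon (U/\sqrt{t}+1)\cdot d(t)^{O(1)}$ or similar — this is the content of a lemma on representing a fixed squarefree number by a binary quadratic form up to squares, proved by factoring $f$ over its splitting field and counting via ideals / the class group, with the $\sqrt t$ saving coming from the fact that $f(u_1,u_2)=tw^2$ is again a genus-zero curve whose points in a box of side $U$ number $O(U/\sqrt t)$ once there is one. Assembling, the count is $\ll \sum_{t\le B} \big(B_1^{1/2}/\sqrt t\big)\big(B_2^{1/2}/\sqrt t\big) d(t)^{O(1)}$ subject to $B_1 B_2 \le B$; the diagonal-looking sum $\sum_{t}\frac{1}{t} d(t)^{O(1)}$ over $t\le B$ contributes a power of $\log B$, and optimising the split $B_1 B_2=B$ against the ranges where solutions exist (namely $t\ll B_i$) produces the $B^{2/3}$: roughly, solutions require $t\ll \min(B_1,B_2)$, and balancing $B_1=B_2=B^{1/2}$ is suboptimal — instead the worst case is when one of them is as large as $B^{2/3}$, giving the stated $B^{2/3}(\log B)^9$.

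The main obstacle is part (3), and within it the uniform count of $u$ with $f(u_1,u_2)\equiv t \pmod{\square}$: one needs an estimate that is simultaneously uniform in $t$ and in the box size, with the correct $t^{-1/2}$ main-term saving and only a divisor-function loss, and one must handle the non-split case of $f$ (where the counting goes through ideals in a quadratic order and the class number / unit contributions must be controlled) as well as the split case. I would expect to invoke or adapt a standard lemma on this (e.g.\ of the type appearing in work on Châtelet surfaces and conic bundles over $\P^1$), and then the remaining difficulty is purely the book-keeping of summing the product bound over squarefree $t$ against the constraint $B_1B_2\le B$ and extracting the optimal exponent $2/3$ and the precise power $9$ of the logarithm; the power $9$ presumably arises as $3+3+3$ or $4+4+1$ from three nested applications of divisor-sum estimates, and I would track the constants carefully at that stage.
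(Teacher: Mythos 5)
Your plan is correct and follows the same route as the paper. The key lemma you identify for part (3) — that the number of primitive $(u_1,u_2)$ in a box of side $U$ with $f(u_1,u_2)=t\,\square$ is $\ll(1+U/\sqrt t)\,\tau(t)$ — is exactly \cite[Corollary~2]{BH-B3}, applied to the non-singular ternary form $f(u_1,u_2)-tm^2$ (with $D\asymp t$, $D_0\ll 1$); the paper cites this directly as Lemma~\ref{BHBbound} rather than reproving it via ideals and class groups, and it handles part (2) the same way (with $t=1$, so the conic has $O(1)$ determinant). Your parametrisation remark for part (2) is also fine, since when $f$ represents a square the conic $f(u_1,u_2)=w^2$ has a rational point and so is a degree-$2$ image of $\P^1$. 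The one place your write-up goes astray is the heuristic for the exponent $2/3$: once $B_1B_2\le B$, the product $(B_1^{1/2}/\sqrt t)(B_2^{1/2}/\sqrt t)\le B^{1/2}/t$ is independent of the split, so there is nothing to optimise, and a clean execution of your plan (also taking into account the $+1$ terms and the constraint $t\ll\min(B_1,B_2)$) actually yields $O(B^{1/2}(\log B)^{O(1)})$ — which is stronger than, and in particular implies, the stated bound. The paper's exponent $2/3$ arises instead from a deliberately lossy simplification, replacing $(T^2S^2R_1R_2/z^2)^{1/3}$ by $(T^2S^2R_1R_2/z)^{1/3}$ before summing over the dyadic boxes and over squarefree $z$; balancing the box sizes, as you suggest, is not where it comes from.
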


We therefore see that the presence of $N_{\textrm{thin},1}$ provides a sharp lower bound for $N_{f,g}(B)$ whenever at least one of $f$ or $g$ split over $\Q$ and at least one of them represents a square number.

\begin{remark}\label{partIIteaser}
As in \cite{BLS} we may ask what happens outside of these thin sets. This amounts to studying the quantity
\[
N^*_{f,g}(B) = \sharp\left\{(u,v)\in\P^1(\Q)\times\P^1(\Q):\begin{array}{l} f(u_1,u_2)\neq\square,\;g(v_1,v_2)\neq\square,\\ X_{(u,v)}(\Q)\neq\emptyset, \; H((u,v))\leq B \end{array}\right\}.
\]
We will study this problem in subsequent work using character sums.
\end{remark}

Problems of this type go back to Serre \cite{SerreConics} who gave upper bounds for the number of triples $(a,b,c)\in\Z^3$ such that $\gcd(a,b,c)=1$, $\max\{|a|,|b|,|c|\}\leq B$ and such that the conic $$ax^2+by^2+cz^2=0$$ is everywhere locally soluble. In this case the family of ternary diagonal conics is parameterised by $\P^2(\Q)$. Upper bounds for more general families of varieties parameterised by $\P^n(\Q)$ have followed, for example in the work of Loughran and Smeets \cite{LS} and Browning, Lyczak and Smeets \cite{BLSmeets}. A more extensive exposition of results is presented in \cite{LRS}, which also gives asymptotics for the case of ternary diagonal conics.

Recently cases where the base of fibration is not projective space have been studied, \cite{BLS,L,LTbT}. In \cite{BLS} it was discovered that unexpected phenomena may occur when the base is biprojective, namely that the existence of thin sets may give more everywhere locally soluble fibres than expected. Outside of these thin sets, even more exotic behaviour was discovered in recent work of the author \cite{Me}. The goal of this work is to study more families over $\P^1(\Q)\times\P^1(\Q)$ so that this unusual behaviour may be considered in a broader context.

\subsection{Acknowledgements} The author would like to thank Tim Browning for suggesting this problem and for feedback on an earlier draft to the paper. Much of the groundwork for this project was completed during the author's visit to ISTA during April 2024 - the author would like to thank their hospitality. Thanks are also due to Efthymios Sofos for helpful discussions and comments. Funding was provided by a Ph.D Scholarship awarded by the Carnegie Trust for the Universities of Scotland.

\section{Expectation VS Reality}\label{Discussion}
To begin we make some preliminary remarks about how this problem relates to the Serre problem and the conjecture of Loughran and Smeets \cite{LS}. First we note that $X$ is non-singular - indeed, the Jacobian of $X$ is given by
\[
\left(\frac{\partial f(u_1,u_2)}{\partial u_1}x^2,\frac{\partial f(u_1,u_2)}{\partial u_2}x^2,\frac{\partial g(v_1,v_2)}{\partial v_1}y^2, \frac{\partial g(v_1,v_2)}{\partial v_2}y^2,2xf(u_1,u_2),2yg(v_1,v_2),-2z\right).
\]
In order for this to be zero, we must have $z=0$, and so at most one of $x$ and $y$ can be zero since $[x:y:z]\in\P^2$. Suppose $x\neq 0$, then we require that
\[
\frac{\partial f(u_1,u_2)}{\partial u_1} = \frac{\partial f(u_1,u_2)}{\partial u_2} = f(u_1,u_2) = 0,
\]
but this is not possible as we require that $f$ has non-zero discriminant and is thus non-singular. Symmetrically, if $y\neq 0$ we cannot have
\[
\frac{\partial g(v_1,v_2)}{\partial v_1} = \frac{\partial g(v_1,v_2)}{\partial v_2} = g(v_1,v_2) = 0.
\]
This is in contrast to the problem considered in \cite{BLS,Me} which deals with a singular variety $X$ (though this may be dealt with by passing to a desingularisation). Next suppose
\[
\widetilde{X} : f(a_0,a_2)x^2+g(a_0,a_3)y^2=z^2,\; a_0a_1=a_2a_3 \subseteq \P^2\times\P^3.
\]
and
\[
Y : a_0a_1=a_2a_3 \subseteq \P^3
\]
Counting with respect to the anticanonical height on $\P^1(\Q)\times\P^1(\Q)$ corresponds to counting soluble fibres of the map $\phi:\widetilde{X}\rightarrow Y$, i.e. counting $y\in Y(\Q)$ such that $\phi^{-1}(y)(\Q)\neq \emptyset$ and $\widetilde{H}(y)\leq \sqrt{B}$, where $\widetilde{H}$ is the naive height on $\P^3(\Q)$. This can be seen by applying the parameterisation
\[
a_0=u_1v_1,\;a_1=u_2v_2,\;a_2=u_2v_1,\;a_3=u_1v_2
\]
of $Y$ by $\P^1\times\P^1$. This relates the current work to \cite{BLS,Me} which are presented as local solubility problems with base of fibration $Y$.

We now discuss the expected behaviour of $N(B)$. To do so we will consider a more general setting. Suppose $\phi:Z\rightarrow W$ is a dominant map between smooth projective varieties $Z$ and $W$ over $\Q$ with geometrically integral fibres admitting multiple fibres. We define $\Delta(\phi)$ as in \cite[equation $(1.3)$]{BL}. Set $W^{(1)}$ to be the collection of codimension $1$ points of $W$. For any $D\in W^{(1)}$, the absolute Galois group $\textrm{Gal}(\overline{\kappa(D)}/\kappa(D))$ of the residue field of $D$ acts on the irreducible components of the reduced fibres $\pi^{-1}(D)\otimes\overline{\kappa(D)}$. Choose some finite subgroup $\Gamma_D(\phi)$ through which the action is factored and define $\Gamma_D^{\circ}(\phi)$ to be the collection of $\gamma\in\Gamma_D(\phi)$ which fix some multiplicity $1$ irreducible component of $\pi^{-1}(D)\otimes\overline{\kappa(D)}$. Then define $\delta_D(\phi) = \#\Gamma_D^{\circ}(\phi)/\#\Gamma_D(\phi)$ and
\[
\Delta(\phi) = \sum_{D\in W^{(1)}}(1-\delta_D(\phi)).
\]
The assumption that the generic fibre is geometrically integral ensures that this sum is finite. Assume further that $W$ does not contain any accumulating thin sets for rational points (see \cite{Peyre} for a definition). This means that $W$ satisfies the conditions of the version of Manin's conjecture found in \cite{ManinandCo.}, which predicts that
\[
\#\{w\in W(\Q): \overline{H}(w)\leq B\} \sim a_{W}B(\log B)^{\rho_{W}-1}
\]
where $a_{W}$ is constant, $\rho_W$ is the Picard rank of $W$ and $\overline{H}$ is the anticanonical height for $W$. Then, following \cite{BLS}, we may consider the prediction that
\begin{equation}\label{generalbaseprediction}
    \#\left\{w\in W(\Q) : \begin{array}{
    c} \phi^{-1}(w)\;\text{is everywhere locally soluble}\\
         \overline{H}(w)\leq B.
    \end{array}\right\} \sim \frac{a'_W B(\log B)^{\rho_{W}-1}}{(\log B)^{\Delta(\phi)}}
\end{equation}
for some constant $a'_W>0$. In our case we have
\[
\#\{(u,v)\in\P^1(\Q)\times\P^1(\Q): H((u,v))\leq B\} \sim cB(\log B)
\]
for some constant $c>0$. The following result, proven in \S \ref{Deltaproof}, characterises how the value of $\Delta(\pi)$ depends on the splitting behaviour of $f$ and $g$:

\begin{proposition}\label{DELTA}
    Let $f,g\in\Z[u_1,u_2]$ be binary quadratic forms with non-zero discriminant and $X_{(u,v)}$ be as in \eqref{ternaryfibres}. Then
    \begin{align}
    \Delta(\pi) = \begin{cases}
        2\;\;\text{if both $f$ and $g$ split over $\Q$},\\
        \frac{3}{2}\;\;\text{if only one of $f$ and $g$ split over $\Q$},\\
        1\;\;\text{if neither $f$ nor $g$ split over $\Q$}.
    \end{cases}
    \end{align}
\end{proposition}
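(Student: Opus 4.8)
The plan is to evaluate $\Delta(\pi) = \sum_{D\in(\P^1\times\P^1)^{(1)}}\bigl(1-\delta_D(\pi)\bigr)$ straight from the definition: first pin down the finitely many codimension-one points $D$ at which $\delta_D(\pi)<1$, and then compute $\delta_D(\pi)$ at each of them.

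First I would dispose of all but finitely many divisors. The fibre $X_{(u,v)}$ is a smooth plane conic, hence geometrically integral, whenever $f(u_1,u_2)g(v_1,v_2)\neq 0$; so for every irreducible divisor $D$ not contained in the discriminant locus $Z:=\{f(u_1,u_2)g(v_1,v_2)=0\}$ the fibre over the generic point $\eta_D$ is geometrically integral and $\delta_D(\pi)=1$. Writing $Z=Z_f\cup Z_g$ with $Z_f=\{f=0\}$ of type $(2,0)$ and $Z_g=\{g=0\}$ of type $(0,2)$, these two curves have classes $(2,0)$ and $(0,2)$ and hence share no irreducible component, so the contributions to $\Delta(\pi)$ come exactly from the irreducible components of $Z_f$ and of $Z_g$. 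Since $\disc f\neq 0$, the form $f$ has two distinct zeros on $\P^1$: if $f$ splits over $\Q$ these are two rational points, giving two components $\{p\}\times\P^1$ of $Z_f$, each with residue field $\Q(t)$; if $f$ does not split over $\Q$, the zeros form a single closed point $p$ with $\kappa(p)$ quadratic, and $Z_f$ is a single component with residue field $\kappa(p)(t)$. The same dichotomy holds for $Z_g$.

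Next I would compute $\delta_D(\pi)$ at a component $D=\{p\}\times\P^1$ of $Z_f$, where $p$ is a zero of $f$ and $\kappa(D)=\kappa(p)(t)$, with $t$ a coordinate on the second factor. Because $X$ is smooth and $Z_f\cap Z_g$ has codimension two in $\P^1\times\P^1$, the fibre over $\eta_D$ is a conic of rank $2$: putting $f=0$, it is cut out by $g(v_1,v_2)y^2-z^2=0$, i.e.\ the union of the two lines $z=\pm\sqrt{g(v_1,v_2)}\,y$, both of multiplicity one. These lines are individually defined over $\kappa(D)$ precisely when $g(v_1,v_2)$ is a square in $\kappa(p)(t)$; but a binary quadratic form of nonzero discriminant cuts out a reduced closed subscheme of $\P^1_{\kappa(p)}$, so this never happens. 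Hence $\Gamma_D(\pi)$ may be taken to be a group of order $2$ acting by swapping the two lines, $\Gamma_D^{\circ}(\pi)=\{1\}$, and $\delta_D(\pi)=1/2$. The argument for the components of $Z_g$ is identical with $f$ and $g$ exchanged, using $\disc g\neq 0$. Summing $1-\delta_D(\pi)=1/2$ over the components found above then yields $4\cdot\tfrac12=2$ when both forms split over $\Q$, $2\cdot\tfrac12+\tfrac12=\tfrac32$ when exactly one of them splits, and $\tfrac12+\tfrac12=1$ when neither splits.

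The hard part is the rank-$2$ claim in the previous paragraph: one must be sure that over each $\eta_D$ the degenerate conic is a pair of \emph{distinct} lines rather than a double line, for otherwise the multiplicities — and hence $\Gamma_D^{\circ}(\pi)$ — would change. This is exactly where smoothness of $X$ (established in \S\ref{Discussion}) and the fact that the discriminant curves $Z_f$ and $Z_g$ meet only in codimension two enter; the remaining non-square assertion in the function field $\kappa(p)(t)$ is then a direct consequence of $\disc f\neq 0$ and $\disc g\neq 0$.
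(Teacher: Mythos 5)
Your proof is correct and takes essentially the same route as the paper: both identify the bad codimension-one divisors as the components of $\{f=0\}$ and $\{g=0\}$, compute $\delta_D(\pi)=1/2$ at each by observing that the degenerate fibre is a pair of Galois-conjugate lines swapped by a quadratic extension, and then count components according to whether $f$, $g$ split over $\Q$. The only difference is cosmetic: you spell out a few background facts (that $Z_f$ and $Z_g$ share no component, that the fibre has rank $2$ rather than $1$, that nonsquareness in $\kappa(p)(t)$ follows from $\disc g\neq 0$) which the paper treats as implicit.
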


By following \eqref{generalbaseprediction} we therefore find that the predicted orders of growth are as follows:

\[
N_{f,g}(B) \sim \begin{cases}
    \frac{c_0 B}{\log B}\;\;\text{if both $f$ and $g$ split over $\Q$},\\
        \frac{c_1 B}{\sqrt{\log B}}\;\;\text{if only one of $f$ and $g$ split over $\Q$},\\
        c_2 B\;\;\text{if neither $f$ nor $g$ split over $\Q$}.
\end{cases}
\]

Theorems \ref{acheightbound} and \ref{thinsetbound} together give counter examples to prediction \eqref{generalbaseprediction} whenever at least one of $f$ or $g$ splits over $\Q$ and at least one of them represents a square. It is notable that the thin sets considered in \cite{BLS} and this paper are not accumulating with respect to Manin's conjecture. We may further predict that \eqref{generalbaseprediction} should hold after the removal of any corresponding accumulating thin set. In \cite{Me} this is proven to be false in the case considered by \cite{BLS} - in fact an unexpected extra growth factor of $\log\log B$ was discovered. We will study what occurs outside of $N_{\textrm{thin},1}$ and $N_{\textrm{thin},2}$ in later work, as suggested by Remark \ref{partIIteaser}. Note, however, that these behaviours are consistent with Conjecture $3.15$ of \cite{LRS} as their assumption $(1.3)$ excludes the cases when one of $f$ or $g$ split. 


\section{Proof of Proposition \ref{DELTA}}\label{Deltaproof}
First note that if $U=[U_1:U_2]\in\P^1$ is a point on $f(u_1,u_2)=0$, then $D_U: u=U$ is a co-dimension one point of $\P^1\times\P^1$ with generic fibre
\[
g(v_1,v_2)y^2 = z^2,
\]
Since $g$ does not factorise as the square of a linear factor, this fibre is split precisely when it is reducible over some quadratic number field $K$, splitting as two lines. Furthermore, the action of the nontrivial element of the Galois group $\mathrm{Gal}(K/\Q)$ permutes the two lines. It follows that $\delta_{D_U} = \frac{1}{2}$ for each such codimension $1$ point. Symmetrically, for any $V=[V_1:V_2]\in\P^1$ such that $g(V_1,V_2)=0$.\\

Second, recall that if $f$ is reducible over $\Q$, then $f(u_1,u_2)=0$ has two scheme theoretic points over $\P^1$, each corresponding to a separate codimension $1$ point of $\P^1\times\P^1$ with reducible fibre under $\pi$. On the other hand, if $f$ is instead only reducible over a quadratic number field $K/\Q$, then the two points of $f(u_1,u_2)=0$ in $\P^1(K)$ are Galois conjugates and thus only contribute a single scheme theoretic point. Thus we only obtain a single codimension $1$ point of $\P^1\times\P^1$ with reducible fibre.\\

From these two points we conclude the following:
\begin{itemize}
    \item If both $f$ and $g$ split over $\Q$ we obtain $4$ codimension $1$ points, $D_1,\ldots,D_4$ say, of $\P^1\times\P^1$, on which all reducible fibres lie. For each $1\leq i\leq 4$, $\delta_{D_i}=\frac{1}{2}$. Thus $\Delta(\pi) = 2$.
    \item If only $f$ splits over $\Q$ then its points in $\P^1$ correspond to $2$ codimension $1$ points of $\P^1\times\P^1$, $D_1$ and $D_2$ say. On the other hand, $g$ splits over a quadratic extension and so gives rise to a single codimension $1$ point with reducible fibres, say $D_3$. Since $\delta_{D_i} =\frac{1}{2}$ for each $1\leq i\leq 3$, $\Delta(\pi) =\frac{3}{2}$. Similarly if $g$ splits and $f$ does not.
    \item If neither $f$ nor $g$ splits over $\Q$, then we will only obtain two codimension $1$ points of $\P^1\times\P^1$ with reducible fibres, $D_1$ and $D_2$ say. Since $\delta_{D_i} =\frac{1}{2}$ for each $1\leq i\leq 2$, and so $\Delta(\pi) = 1$.
\end{itemize}

\section{Local densities}\label{Localdensities}
In this section we compute the densities of locally soluble fibres. To this purpose we define the set $\Omega'_p\subseteq(\Z/p^2\Z)^4$ for each prime $p$. If $p|2\mathrm{disc}(f)\mathrm{disc}(g)$, then set $\Omega'_p=\emptyset$; otherwise, set
\[
\Omega'_p = \Omega_{p,f} \sqcup \Omega_{p,f,g} \sqcup \Omega_{p,g}
\]
where
\[
\Omega_{p,f} = \left\{(u_1,u_2,v_1,v_2)\in(\Z/p^2\Z)^4: \begin{array}{l} f(u_1,u_2)\equiv 0\bmod{p},\;f(u_1,u_2)\not\equiv0\bmod{p^2},\\ g(v_1,v_2)\not\equiv\square\bmod{p},\;(u_1,u_2),(v_1,v_2)\not\equiv(0,0)\bmod{p}\end{array}\right\},
\]

\[
\Omega_{p,g} = \left\{(u_1,u_2,v_1,v_2)\in(\Z/p^2\Z)^4: \begin{array}{l} g(v_1,v_2)\equiv 0\bmod{p},\;g(v_1,v_2)\not\equiv 0 \bmod{p^2},\\ f(u_1,u_2)\not\equiv\square\bmod{p},\;(u_1,u_2),(v_1,v_2)\not\equiv(0,0)\bmod{p}\end{array}\right\},
\]
and
\[
\Omega_{p,f,g} = \left\{(u_1,u_2,v_1,v_2)\in(\Z/p^2\Z)^4: \begin{array}{l} f(u_1,u_2)\equiv g(v_1,v_2)\equiv 0\bmod{p},\\f(u_1,u_2),g(v_1,v_2)\not\equiv 0 \bmod{p^2},\\ -f(u_1,u_2)g(v_1,v_2)/p^2\not\equiv\square\bmod{p},\\(u_1,u_2),(v_1,v_2)\not\equiv(0,0)\bmod{p}\end{array}\right\}.
\]
The following lemma tells us that these residue sets give insoluble fibres.

\begin{lemma}\label{badresidueclassesarebad}
    Let $p\nmid 2\mathrm{disc}(f)\mathrm{disc}(g)$ be a prime. If $((u_1,u_2),(v_1,v_2))\in\Z_{prim}^2\times\Z_{prim}^2$ reduces to some $(\bar{u}_1,\bar{u}_2,\bar{v}_1,\bar{v}_2)\in \Omega'_{p}$ modulo $p^2$ then $X_{(u,v)}(\Q_p)=\emptyset$.
\end{lemma}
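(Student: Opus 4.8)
The plan is to argue by contradiction. Suppose $X_{(u,v)}(\Q_p)\neq\emptyset$, so there is $(x,y,z)\in\Q_p^3\setminus\{0\}$ with $f(u_1,u_2)x^2+g(v_1,v_2)y^2=z^2$; after scaling we may assume $x,y,z\in\Z_p$ and $\min\{v_p(x),v_p(y),v_p(z)\}=0$. The first step is to read off from membership in $\Omega'_p$ the precise $p$-adic data needed. Since the value of a binary form at an integral point is determined modulo $p^2$ by the reduction of the point, the condition ``$\equiv 0\bmod p$, $\not\equiv 0\bmod p^2$'' forces $v_p(f(u_1,u_2))=1$ (resp.\ $v_p(g(v_1,v_2))=1$) exactly, while ``$\not\equiv\square\bmod p$'' forces the relevant value to be a $p$-adic unit whose reduction is a non-square; because $p$ is odd, Hensel's lemma then says this unit is not a square in $\Q_p^{\times}$. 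As the three subsets of $\Omega'_p$ are disjoint, it suffices to obtain a contradiction in each.

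\emph{Case $\Omega_{p,f}$.} Here $v_p(f(u_1,u_2))=1$ and $c:=g(v_1,v_2)$ is a non-square unit. Reducing the conic modulo $p$ gives $c\,y^2\equiv z^2\bmod p$; if $p\nmid y$ this exhibits $c$ as a square modulo $p$, a contradiction, so $p\mid y$ and hence $p\mid z$. Then $v_p\!\left(z^2-g(v_1,v_2)y^2\right)\geq 2$, whereas $v_p\!\left(f(u_1,u_2)x^2\right)=1+2v_p(x)$ is odd, so equality of these two valuations forces $v_p(x)\geq 1$; thus $p\mid x,y,z$, contradicting primitivity. The case $\Omega_{p,g}$ is identical after interchanging $f$ and $g$.

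\emph{Case $\Omega_{p,f,g}$.} Write $f(u_1,u_2)=p\widetilde f$ and $g(v_1,v_2)=p\widetilde g$ with $\widetilde f,\widetilde g\in\Z_p^{\times}$, so that $-\widetilde f\,\widetilde g\equiv -f(u_1,u_2)g(v_1,v_2)/p^2\bmod p$ is a non-square unit. Reducing $p\widetilde f x^2+p\widetilde g y^2=z^2$ modulo $p$ gives $p\mid z$; substituting $z=p\widetilde z$ and dividing by $p$ yields $\widetilde f x^2+\widetilde g y^2=p\widetilde z^{2}$. If $p\mid y$, reducing modulo $p$ gives $p\mid\widetilde f x^2$, hence $p\mid x$ and so $p\mid x,y,z$, a contradiction. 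If $p\nmid y$, then modulo $p$ we get $(x/y)^2\equiv-\widetilde g/\widetilde f$, and multiplying by $\widetilde f^2$ shows $-\widetilde f\,\widetilde g$ is a square modulo $p$, contradicting the previous sentence.

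I expect no serious obstacle: each of the three cases is a two-line descent once the setup is in place. The only point warranting care is the opening translation step --- checking that the mod $p^2$ membership conditions, together with the primitivity of $(u_1,u_2)$ and $(v_1,v_2)$ and the oddness of $p$ guaranteed by $p\nmid 2\disc(f)\disc(g)$, really do produce the claimed valuations and genuine non-squareness in $\Q_p^{\times}$ --- and keeping straight, in each case, which of $x,y,z$ is being forced to be divisible by $p$.
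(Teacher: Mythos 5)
Your argument is correct and proceeds along essentially the same lines as the paper: in each of the three cases one reduces the conic modulo $p$ and shows the reduced equation cannot be satisfied. The only real difference is one of detail --- where the paper observes that the reduced conic $Gy^2\equiv z^2\bmod p$ (resp.\ $kx^2+ly^2\equiv 0\bmod p$ after passing to the equivalent form $kx^2+ly^2=pz^2$) is insoluble and concludes $X_{(u,v)}(\QQ_p)=\emptyset$ directly, you carry out the standard $p$-adic descent explicitly: normalising $(x,y,z)\in\ZZ_p^3$ to be primitive, showing $p$ must divide two of the coordinates, and then comparing valuations (or iterating the reduction) to force $p$ to divide all three, contradicting primitivity. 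This fills in precisely the step the paper leaves implicit (why insolubility of the singular reduction kills $\QQ_p$-points), so your write-up is more self-contained though not a genuinely different route. Two minor remarks: the sentence invoking Hensel's lemma to upgrade ``$G$ not a square mod $p$'' to ``$G$ not a square in $\QQ_p^\times$'' is unnecessary for your argument, which only ever uses the mod-$p$ statement; and in the $\Omega_{p,f}$ case the phrase ``is odd'' is a slight distraction --- what you actually use is the cleaner observation that $v_p(fx^2)=1+2v_p(x)\ge 2$ forces $v_p(x)\ge 1$.
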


\begin{proof}
    For $(\b{u},\b{v})=((u_1,u_2),(v_1,v_2))\in\Z_{prim}^2\times\Z_{prim}^2$, let $F=f(u_1,u_2)$ and $G=g(v_1,v_2)$. Suppose $(\b{u},\b{v})$ reduces to $(\bar{u}_1,\bar{u}_2,\bar{v}_1,\bar{v}_2)\in \Omega_{p,f}$ modulo $p^2$. Then $F=pk$ for some $k\in\Z$ coprime to $p$ and the fibre at $(\b{u},\b{v})$ is $X_{\b{u},\b{v}}: kpx^2+Gy^2=z^2$. The reduction  fibre $\Bar{X}_{\b{u},\b{v}}$ to $\F_p$ then takes the form
    \[
    Gy^2\equiv z^2\bmod{p},
    \]
    which is insoluble over $\F_p$ since $G\neq\square\bmod{p}$. It follows that $X_{\b{u},\b{v}}(\Q_p) = \emptyset$. The result follows by a symmetric argument if $(\b{u},\b{v})$ reduces to an element of $\Omega_{p,g}$, thus we may now assume that $(\b{u},\b{v})$ reduces to some point in $\Omega_{p,f,g}$. In this case $F=pk$ and $G=pl$ for $k,l\in\Z$ each coprime to $p$. The fibre in question is therefore given by $X_{\b{u},\b{v}}: pkx^2+ply^2=z^2$. This fibre is equivalent over $\Q_p$ to the ternary conic $kx^2+ly^2=pz^2$, whose reduction to $\F_p$ is
    \[
    kx^2 + ly^2 \equiv 0\bmod{p}.
    \]
    This is soluble if and only if $-kl\equiv\square\bmod{p}$, but this does not hold by the assumption that $(\b{u},\b{v})$ reduces to an element of $\Omega_{p,f,g}$ modulo $p^2$. The result follows.
\end{proof}

Note that the reverse direction does not hold. For example, suppose $(\b{u},\b{v})\in \Z_{prim}^2\times\Z_{prim}^2$ is such that $f(u_1,u_2)=p^3 k$ for $k$ coprime to $p$ and $g(v_1,v_2) \not\equiv\square\bmod{p}$. Then $(\b{u},\b{v})$ does not reduce to a point in $\Omega_p$, yet the corresponding fibre does not have a $\Q_p$ point. Suppose $\Omega_{p^2}$ is precisely the collection of these other bad residue classes modulo $p^2$. Then we have the following partial converse to Lemma \ref{badresidueclassesarebad}.

\begin{lemma}\label{partialconversetobadresidues}
Let $p\nmid 2\mathrm{disc}(f)\mathrm{disc}(g)$ be a prime and suppose that $((u_1,u_2),(v_1,v_2))\in\Z^2_{prim}\times\Z^2_{prim}$ is such that $X_{\b{u},\b{v}}(\Q_p)=\emptyset$. Then either $((u_1,u_2),(v_1,v_2))\bmod{p^2}\in\Omega'_p$, $f(u_1,u_2)\equiv0\bmod{p^2}$ or $g(v_1,v_2)\equiv0\bmod{p^2}$. In particular,
\[
\Omega_{p^2} \subseteq \left\{(u_1,u_2,v_1,v_2)\in\left(\Z/p^2\Z\right): \begin{array}{l} (u_1,u_2),(v_1,v_2)\not\equiv 0\bmod{p},\\ f(u_1,u_2)\equiv0\bmod{p^2}\;\text{or}\;g(v_1,v_2)\equiv0\bmod{p^2}\end{array}\right\}.
\]
\end{lemma}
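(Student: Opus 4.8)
The plan is to classify completely, for a prime $p\nmid 2\disc(f)\disc(g)$ (so $p$ is odd), which primitive pairs $\mathbf{u}=(u_1,u_2)$, $\mathbf{v}=(v_1,v_2)$ make the conic $X_{\mathbf{u},\mathbf{v}}:Fx^2+Gy^2=z^2$, with $F=f(u_1,u_2)$ and $G=g(v_1,v_2)$, insoluble over $\Q_p$, and then to match every such pair against the three listed alternatives. The tool is the standard description of $\Q_p$-solubility of a diagonal ternary conic for odd $p$: after rescaling variables one reduces modulo $p$ and applies Hensel's lemma, so that solubility is controlled by the valuations of the coefficients modulo $2$ together with the reductions modulo $p$ of their unit parts.

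First I would dispose of degeneracies. If $F=0$ then the fibre is $Gy^2=z^2$ and contains the rational point $[1:0:0]$, and if $G=0$ it is $Fx^2=z^2$ and contains $[0:1:0]$; hence the hypothesis $X_{\mathbf{u},\mathbf{v}}(\Q_p)=\emptyset$ already forces $F,G\neq 0$. Write $F=p^{a}F_0$ and $G=p^{b}G_0$ with $F_0,G_0\in\Z_p^{\times}$. Rescaling $x$ by $p^{\lfloor a/2\rfloor}$ and $y$ by $p^{\lfloor b/2\rfloor}$ is a $\Q_p$-isomorphism of the conic, replacing $F$ by $F_0$ or $pF_0$ and $G$ by $G_0$ or $pG_0$ according to the parities of $a$ and $b$; so we may assume the three coefficients have valuation $0$ or $1$, the $z^2$-coefficient being $1$.

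Then I would run the case split on the parities of $(a,b)$, reading off an \emph{exact} solubility criterion in each branch. If $a,b$ are both even, all coefficients are units, the conic is smooth over $\F_p$, hence has a smooth $\F_p$-point which lifts: soluble, nothing to prove. If $a$ is odd and $b$ even, the conic $pF_0x^2+G_0y^2=z^2$ reduces to $G_0y^2\equiv z^2\bmod p$, which is $\Q_p$-soluble iff $G_0$ is a square mod $p$ (the non-square case is killed by primitivity: a primitive solution would force $p\mid y,z$ and then $p\mid x$); so insolubility forces $G_0$ to be a non-square unit, and then $g(v_1,v_2)=p^{b}G_0\equiv\square\bmod p$ unless $b=0$. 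Thus either $b\geq 2$ and $g(v_1,v_2)\equiv 0\bmod p^2$, or $a\geq 3$ and $f(u_1,u_2)\equiv 0\bmod p^2$, or $a=1$, $b=0$, in which case $f(u_1,u_2)\equiv 0\bmod p$, $f(u_1,u_2)\not\equiv 0\bmod p^2$ and $g(v_1,v_2)\not\equiv\square\bmod p$ — precisely membership in $\Omega_{p,f}$. The case $a$ even, $b$ odd is symmetric, yielding $\Omega_{p,g}$, or $f(u_1,u_2)\equiv 0\bmod p^2$, or $g(v_1,v_2)\equiv 0\bmod p^2$. Finally, if $a,b$ are both odd, the conic $pF_0x^2+pG_0y^2=z^2$ has $p\mid z$ for any primitive solution; writing $z=pz'$ and dividing by $p$ gives $F_0x^2+G_0y^2=pz'^2$, which is $\Q_p$-soluble iff $-F_0G_0$ is a square mod $p$ (a primitive solution cannot have $p$ dividing both $x$ and $y$, and when $-F_0G_0$ is a square one lifts the smooth point $(x_0,y_0,0)$ by Hensel). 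Insolubility forces $-F_0G_0\not\equiv\square\bmod p$; if $a\geq 3$ or $b\geq 3$ then $f(u_1,u_2)$ or $g(v_1,v_2)$ is $\equiv 0\bmod p^2$, while if $a=b=1$ then $-F_0G_0=-f(u_1,u_2)g(v_1,v_2)/p^2$ and all the congruences defining $\Omega_{p,f,g}$ hold. In every branch the primitivity of $\mathbf{u}$ and $\mathbf{v}$ supplies the remaining conditions $(u_1,u_2),(v_1,v_2)\not\equiv(0,0)\bmod p$ appearing in the $\Omega$-sets.

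This proves the first assertion. The containment for $\Omega_{p^2}$ then follows immediately: by definition a class in $\Omega_{p^2}$ is the reduction mod $p^2$ of some primitive pair with insoluble fibre that does not reduce into $\Omega'_p$, so the first assertion forces $f(u_1,u_2)\equiv 0\bmod p^2$ or $g(v_1,v_2)\equiv 0\bmod p^2$, and primitivity gives the two non-vanishing conditions — all of which depend only on the class mod $p^2$. The one point demanding care is the parity bookkeeping above: one must check that each Hensel-plus-primitivity argument is an honest \emph{iff}, and that the resulting congruences coincide on the nose with the definitions of $\Omega_{p,f}$, $\Omega_{p,g}$ and $\Omega_{p,f,g}$ rather than merely implying them. (Alternatively one may invoke the odd-$p$ Hilbert symbol formula $(p^{\alpha}u,\,p^{\beta}w)_p=(-1)^{\alpha\beta(p-1)/2}\legendre{u}{p}^{\beta}\legendre{w}{p}^{\alpha}$ for units $u,w$ and run the same four cases, but the explicit reduction keeps the matching with the $\Omega$-sets transparent.)
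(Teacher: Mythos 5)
Your argument is correct and rests on the same core facts as the paper's proof (reduction modulo $p$ of a diagonal conic, Hensel lifting, and the observation that insolubility forces the relevant Legendre conditions), but you organize it differently: you run a complete classification by the parities of $a=v_p(F)$ and $b=v_p(G)$ after rescaling, and then sort each branch into the three alternatives. The paper instead argues in the contrapositive --- it assumes from the outset that $f(u_1,u_2)\not\equiv 0\bmod p^2$ and $g(v_1,v_2)\not\equiv 0\bmod p^2$ (so $a,b\in\{0,1\}$), invokes Chevalley--Warning to dispose of the $a=b=0$ case, and then checks the three remaining $\bmod p$ cases land in $\Omega_{p,f}$, $\Omega_{p,g}$, $\Omega_{p,f,g}$ --- which shortcuts the bookkeeping around $a\geq 3$ and $b\geq 2$ that you carry out explicitly. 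Both routes are sound; the contrapositive framing buys brevity, while your forward classification yields the exact solubility criterion in every valuation regime, which is stronger than what the lemma asserts but not needed here. One small expository wobble: the remark that ``$g(v_1,v_2)=p^bG_0\equiv\square\bmod p$ unless $b=0$'' is a distraction (when $b\geq 2$ you simply have $g\equiv 0\bmod p^2$ and are done); the surrounding trichotomy you then state is what actually carries the argument and is correct.
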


\begin{proof}
    Suppose that $((u_1,u_2),(v_1,v_2))\in\Z^2_{prim}\times\Z^2_{prim}$ is such that $X_{\b{u},\b{v}}(\Q_p)=\emptyset$, $f(u_1,u_2)\not\equiv0\bmod{p^2}$ and $g(v_1,v_2)\not\equiv0\bmod{p^2}$. We must show that $((u_1,u_2),(v_1,v_2))\bmod{p^2}\in\Omega'_p$. As before, write $F=f(u_1,u_2)$ and $G=g(v_1,v_2)$. If $F,G\not\equiv0\bmod{p}$, then it follows from the Chevalley--Warning theorem that $X_{(u,v)}(\Q_p)\neq\emptyset$. Therefore, we assume that $F\equiv0\bmod{p}$ or $G\equiv0\bmod{p}$.\\
    If $F\equiv0\bmod{p}$ and $G\not\equiv0\bmod{p}$, then the reduced fibre takes the form $Gy^2=z^2\bmod{p}$, which has a solution over $\F_p$ if and only if $G\equiv\square\bmod{p}$. Since $X_{\b{u},\b{v}}(\Q_p)=\emptyset$, we must have $G\not\equiv\square\bmod{p}$, which means that $(\b{u},\b{v})\bmod{p^2}\in\Omega_{p,f}$. Similarly, if $G\equiv0\bmod{p}$ and $F\not\equiv0\bmod{p}$, $(\b{u},\b{v})\bmod{p^2}\in\Omega_{p,g}$. Finally, if $F\equiv0\bmod{p}$ and $G\equiv0\bmod{p}$, then, writing $F=pk$, $G=pl$ where $\mathrm{gcd}(p,k)=\mathrm{gcd}(p,l)=1$, we have that the fibre $X_{\b{u},\b{v}}$ is equivalent to the conic
    \[
    kx^2+ly^2 = pz^2,
    \]
    which fails to have a solution in $\Q_p$ if and only if $-kl\not\equiv\square\bmod{p}$. Therefore we must have $(\b{u},\b{v})\bmod{p^2}\in\Omega_{p,f,g}$.
\end{proof}

Our next lemma computes the size of the sets $\Omega_{p,f}$, $\Omega_{p,g}$, $\Omega_{p,f,g}$ and $\Omega_{p^2}$.

\begin{lemma}
    For $p\nmid 2\mathrm{disc}(f)\mathrm{disc}(g)$ we have,
    \begin{align*}
    &|\Omega_{p,f}| = \begin{cases}
    p^3\left(p-1\right)^3\left(p-\left(\frac{\mathrm{disc}(g)}{p}\right)\right)\;\text{if}\; p\;\text{splits $f$}\\
    0\;\text{otherwise},
    \end{cases}\\
    &|\Omega_{p,g}| = \begin{cases}
    p^3\left(p-1\right)^3\left(p-\left(\frac{\mathrm{disc}(f)}{p}\right)\right)\;\text{if}\; p\;\text{splits $g$}\\
    0\;\text{otherwise},
    \end{cases}\\
    &|\Omega_{p,f,g}| = \begin{cases}
    2p^3\left(p-1\right)^3\;\text{if $p$ splits both $f$ and $g$}\\
    0\;\text{otherwise},
    \end{cases}\\
    &|\Omega_{p^2}| = O\left(p^6\right).
    \end{align*}
\end{lemma}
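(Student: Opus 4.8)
The computations of $|\Omega_{p,f}|$, $|\Omega_{p,g}|$, $|\Omega_{p,f,g}|$ are all of a kind: I want to count points $(\bar u_1,\bar u_2,\bar v_1,\bar v_2)\in(\Z/p^2\Z)^4$ satisfying congruence conditions on $f$ and $g$ modulo $p$ and $p^2$. Since $p\nmid 2\disc(f)\disc(g)$, the curves $f=0$ and $g=0$ in $\P^1_{\F_p}$ are smooth, so I will first count the relevant residues modulo $p$ and then lift. The key input is that for a binary quadratic form $q$ with $p\nmid 2\disc(q)$, the number of $(\bar u_1,\bar u_2)\in\F_p^2\setminus\{(0,0)\}$ with $q\equiv 0\bmod p$ is $2(p-1)$ if $p$ splits $q$ and $0$ otherwise (each of the two roots in $\P^1(\F_p)$ lifts to $p-1$ nonzero scalar multiples); and the number with $q\equiv c\bmod p$ for a fixed $c\in\F_p^\times$ is $(p-1)\bigl(p-\bigl(\tfrac{\disc q}{p}\bigr)\bigr)$ — this is the standard count of representations of a nonzero value by a nondegenerate binary form, which I would either cite or derive quickly from diagonalising $q$ over $\F_p$.

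**Steps.** First I would handle the ``$q\not\equiv\square\bmod p$'' conditions. The number of $(\bar v_1,\bar v_2)\in\F_p^2$ with $g(v_1,v_2)$ a nonzero nonsquare is $\tfrac{p-1}{2}\cdot(p-1)\bigl(p-\bigl(\tfrac{\disc g}{p}\bigr)\bigr)$, summing the representation count over the $\tfrac{p-1}{2}$ nonsquare values; I must also note that such $(\bar v_1,\bar v_2)$ is automatically $\not\equiv(0,0)$. For $\Omega_{p,f}$: the $u$-part requires $f\equiv 0\bmod p$ but $f\not\equiv 0\bmod{p^2}$ and $(\bar u_1,\bar u_2)\not\equiv(0,0)\bmod p$. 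Modulo $p$ there are $2(p-1)$ such residues when $p$ splits $f$ (zero otherwise); each lifts to $p^2$ residues mod $p^2$, of which exactly $p$ have $f\equiv 0\bmod{p^2}$ — here smoothness of $f=0$ at the point (guaranteed by $p\nmid\disc f$) is what makes the fibre of $f\bmod{p^2}$ over $0$ have size exactly $p$ — leaving $p(p-1)$ lifts per residue, hence $2(p-1)\cdot p(p-1)=2p(p-1)^2$ choices for the $u$-part. The $v$-part: the nonsquare condition mod $p$ gives $\tfrac{p-1}{2}(p-1)\bigl(p-\bigl(\tfrac{\disc g}{p}\bigr)\bigr)$ residues mod $p$, each lifting freely to $p^2$ residues mod $p^2$, for a total of $\tfrac{p^2(p-1)^2}{2}\bigl(p-\bigl(\tfrac{\disc g}{p}\bigr)\bigr)$. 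Multiplying the two parts gives $2p(p-1)^2\cdot\tfrac{p^2(p-1)^2}{2}\bigl(p-\bigl(\tfrac{\disc g}{p}\bigr)\bigr)=p^3(p-1)^4\bigl(p-\bigl(\tfrac{\disc g}{p}\bigr)\bigr)$; I will need to reconcile the exponent of $(p-1)$ with the stated $(p-1)^3$, so I would recheck whether the intended normalisation treats $(u_1,u_2)$ projectively or fixes a coordinate, and adjust the bookkeeping accordingly — this is the one place where I expect to have to match conventions with the statement carefully rather than just compute.

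**Remaining cases.** $\Omega_{p,g}$ is identical with $f$ and $g$ swapped. For $\Omega_{p,f,g}$ I need $p$ to split \emph{both} $f$ and $g$: the $u$-part contributes $2p(p-1)^2$ (as above, $f\equiv0\bmod p$, $\not\equiv 0\bmod{p^2}$) and likewise $2p(p-1)^2$ for the $v$-part, and then I must impose the condition $-(F/p)(G/p)\not\equiv\square\bmod p$ where $F=f(u_1,u_2)=pk$, $G=g(v_1,v_2)=pl$. For each of the $2(p-1)$ admissible $u$-residues mod $p$, the value $k=F/p\bmod p$ is a well-defined nonzero class, and similarly $l$; as $(\bar u_1,\bar u_2)$ ranges over the lifts and $(\bar v_1,\bar v_2)$ over theirs, exactly half the pairs satisfy $-kl\not\equiv\square$, which explains the factor $\tfrac12$ relative to the naive product and ultimately the ``$2p^3(p-1)^3$'' (again modulo the $(p-1)$-exponent reconciliation). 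Finally, $|\Omega_{p^2}|=O(p^6)$ is immediate from Lemma~\ref{partialconversetobadresidues}: $\Omega_{p^2}$ is contained in the set of $(\bar u_1,\bar u_2,\bar v_1,\bar v_2)\in(\Z/p^2\Z)^4$ with $f(u_1,u_2)\equiv0\bmod{p^2}$ or $g(v_1,v_2)\equiv0\bmod{p^2}$, and each such condition (being a smooth congruence mod $p^2$) cuts out $O(p^3)$ residues in the two relevant coordinates while the other two range over $(\Z/p^2\Z)^2$ freely, giving $O(p^2)\cdot O(p^3)=O(p^5)$, comfortably within the claimed $O(p^6)$. The main obstacle throughout is purely bookkeeping: getting the power of $p-1$ and the lifting factors to agree with the asserted formulas, which hinges on using $p\nmid 2\disc(f)\disc(g)$ to guarantee the Hensel-type lifting counts are exact.
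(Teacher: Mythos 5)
Your factorisation into a $u$-factor and a $v$-factor, with lifting from $\F_p$ to $\Z/p^2\Z$ via smoothness of $f$ and $g$, is the same skeleton as the paper's proof; the paper simply evaluates the same local counts via orthogonality and Gauss sums where you invoke a closed-form representation count. Your route would be cleaner if correct, but the key formula you cite is wrong, and that --- not a projective-versus-affine normalisation --- is the source of the $(p-1)$ discrepancy you flag. For $p\nmid 2\disc g$ and fixed $c\in\F_p^\times$, the number of $(v_1,v_2)\in\F_p^2$ with $g(v_1,v_2)\equiv c\bmod p$ is $p-\left(\frac{\disc g}{p}\right)$, not $(p-1)\left(p-\left(\frac{\disc g}{p}\right)\right)$; indeed for the split form $g=v_1v_2$ there are only $p-1$ solutions, and your count of nonsquare-valued pairs, $\frac{p-1}{2}(p-1)\left(p-\left(\frac{\disc g}{p}\right)\right)$, already exceeds $p^2=\#\F_p^2$ once $p\geq 5$. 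With the corrected count the $v$-factor is $p^2\cdot\frac{p-1}{2}\left(p-\left(\frac{\disc g}{p}\right)\right)$, which combined with your (correct) $u$-factor $2p(p-1)^2$ gives exactly $p^3(p-1)^3\left(p-\left(\frac{\disc g}{p}\right)\right)$, with nothing to reconcile.

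Two further repairs. For $\Omega_{p,f,g}$, your ``half the pairs'' heuristic is sound but needs the explicit input that, as $(u_1,u_2)$ runs over the $p^2$ lifts of a fixed nonzero root of $f$ in $\F_p^2$, the value $f(u_1,u_2)/p\bmod p$ is equidistributed in $\F_p$ (this is the Hensel-type consequence of $p\nmid\disc f$, and it is exactly what the paper's evaluation of $\widetilde{F}(p,k)$ encodes); you should then carry the arithmetic through rather than defer it. For $|\Omega_{p^2}|$, the constrained pair $(u_1,u_2)\in(\Z/p^2\Z)^2$ with $(u_1,u_2)\not\equiv(0,0)\bmod p$ and $f(u_1,u_2)\equiv0\bmod{p^2}$ has $O(p^2)$ elements (it is $2p(p-1)$ when $p$ splits $f$, zero otherwise), not $O(p^3)$; and the unconstrained $(v_1,v_2)$ range over $(\Z/p^2\Z)^2$, which has $p^4$ elements, not $O(p^2)$. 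The correct product is $O(p^2)\cdot p^4=O(p^6)$, which is the stated bound, whereas your $O(p^5)$ arises from two compensating slips and is not actually justified by this containment.
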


\begin{proof}
    The arguments for $\Omega_{p,f}$ and $\Omega_{p,g}$ are equivalent and so we focus on the former. Looking at the definition we have that $|\Omega_{p,f}|$ is equal to
    \[
    \left|\left\{(u_1,u_2)\hspace{-3pt}\in\hspace{-3pt}(\Z/p^2\Z)^4:\hspace{-5pt}\begin{array}{c}f(u_1,u_2)\equiv 0 \bmod{p}\\f(u_1,u_2)\not\equiv 0 \bmod{p^2}\\ (u_1,u_2)\not\equiv(0,0)\bmod{p}\end{array}\hspace{-5pt}\right\}\right|\cdot\left|\left\{(v_1,v_2)\in(\Z/p^2\Z)^4:\hspace{-5pt}\begin{array}{c} g(v_1,v_2)\not\equiv\square\bmod{p}\\(v_1,v_2)\not\equiv(0,0)\bmod{p}\end{array}\hspace{-5pt}\right\}\right|.
    \]
    Call the first cardinality $F(p)$ and the second $G(p)$. Let us first handle the $F(p)$. Recall that if $p$ does not split $f$ then
    \begin{equation}\label{modpf}
    f(u_1,u_2) \equiv 0\bmod{p}
    \end{equation}
    has no solution, and so $F(p)=0$. If $p$ does split $f$, then there are $2(p-1)$ non-zero solutions $(u_1,u_2)\in(\Z/p\Z)^2$ to \eqref{modpf}. Each of these solutions have $p(p-1)$ lifts to $(\Z/p^2\Z)^2$ such that $f(u_1,u_2)\not\equiv 0\bmod{p^2}$. Thus $F(p)=2p(p-1)^2$.\\
    Turning to $G(p)$ we have,
    \begin{align*}
    G(p)=p^2\sum_{\substack{k\in(\Z/p\Z)\\k\neq\square}} \#\{(v_1,v_2)\in(\Z/p\Z)^2:g(v_1,v_2)\equiv k\bmod{p}\}.
    \end{align*}
    Note that the sum counts solutions modulo $p$ and that the $p^2$ corresponds to the lifts of such solutions to $(\Z/p^2\Z)^2$. By orthogonality,
    \begin{align*}
        G(p) = p\sum_{\substack{k\in(\Z/p\Z)\\k\neq\square}} \sum_{(v_1,v_2)\in(\Z/p\Z)^2}\sum_{a\in(\Z/p\Z)}e\left(\frac{ag(v_1,v_2)}{p}\right)e\left(\frac{-ak}{p}\right)
    \end{align*}
    where we have used the standard notation $e(z)=e^{2\pi iz}$. Since $p\nmid 2\mathrm{disc}(g)$, we may diagonalise $g$ modulo $p$. Therefore, using an invertible change of variables if necessary, we may assume that
    \[
    g(v_1,v_2) = v_1^2 - \mathrm{disc}(g)v_2^2.
    \]
    Then we have,
    \begin{align*}
    G(p) = p\sum_{\substack{k\in(\Z/p\Z)\\k\not\equiv\square\bmod{p}}}\sum_{a\in(\Z/p\Z)}e\left(\frac{-ak}{p}\right) \sum_{v_2\in(\Z/p\Z)}e\left(\frac{-a\mathrm{disc(g)}v_2^2}{p}\right)\sum_{v_1\in(\Z/p\Z)}e\left(\frac{av_1^2}{p}\right),
    \end{align*}
    which we may compute using standard results for Gauss sums to obtain
    \[
    G(p) = \frac{1}{2}p(p-1)^2\left(p-\left(\frac{\mathrm{disc(g)}}{p}\right)\right).
    \]
    Multiplying this with our expressions for $F(p)$ gives the desired result.\\
    We now turn to $|\Omega_{p,f,g}|$. From the definition of $\Omega_{p,f,g}$, it is clear that it will be empty if $p$ does not split either $f$ or $g$. If $p$ splits both $f$ and $g$, then, again from the definition, for any $(u_1,u_2,v_1,v_2)\in\left(\Z/p^2\Z\right)^4$ there exist $k,l\in\{1,\ldots,p-1\}$ such that $-kl\not\equiv\square\bmod{p}$ and
    \[
    f(u_1,u_2)\equiv pk\bmod{p^2}\;\;\text{and}\;\;g(v_1,v_2)\equiv pl\bmod{p^2}.
    \]
    Using this we may write
    \[
    |\Omega_{p,f,g}| = \frac{1}{p^4}\sum_{\substack{k,l\in(\Z/p\Z)\\-kl\not\equiv\square\bmod{p}}}\widetilde{F}(p,k)\widetilde{G}(p,l)
    \]
    where
    \[
    \widetilde{F}(p,k)=\sum_{\substack{(u_1,u_2)\in(\Z/p^2\Z)^2\\(u_1,u_2)\not\equiv(0,0)\bmod{p}}}\sum_{a\in(\Z/p^2\Z)}e\left(\frac{af(u_1,u_2)}{p^2}\right)e\left(\frac{-apk}{p^2}\right)
    \]
    and
    \[
    \widetilde{G}(p,l) = \sum_{\substack{(v_1,v_2)\in(\Z/p^2\Z)^2\\(v_1,v_2)\not\equiv(0,0)\bmod{p}}}\sum_{b\in(\Z/p^2\Z)}e\left(\frac{bg(v_1,v_2)}{p^2}\right)e\left(\frac{-bpl}{p^2}\right).
    \]
    These are symmetric and will contribute the same amount to the sum, so we will restrict our focus to $\widetilde{F}(p,k)$. Similar to our previous reasoning, we may diagonalise $f$ modulo $p^2$ and thus assume that $f(u_1,u_2)=u_1^2-u_2^2$ since $p$ splits $f$ and so $\mathrm{disc}(f)$ is a square modulo $p$. Then, noting that $(u_1,u_2)\equiv(0,0)\bmod{p}$ contribute null to this sum, we may add in those terms at no extra cost. Rearranging then gives,
    \[
    \widetilde{F}(p,k) = \sum_{a\in(\Z/p^2\Z)}e\left(\frac{-ak}{p}\right)\sum_{\substack{(u_1,u_2)\in(\Z/p^2\Z)^2}}e\left(\frac{au_1^2}{p^2}\right)e\left(\frac{-au_2^2}{p^2}\right).
    \]
    Focusing on the sum over $u_1^2$, and writing $u_1 = s+pt$ for $s,t\in\{0,\ldots,p-1\}$, we have
    \begin{align*} \sum_{\substack{u_1\in(\Z/p^2\Z)}}e\left(\frac{au_1^2}{p^2}\right) &= \sum_{\substack{s\in(\Z/p\Z)}}e\left(\frac{as^2}{p^2}\right)\sum_{\substack{t\in(\Z/p\Z)}}e\left(\frac{ast}{p}\right)\\ 
    =&\sum_{\substack{s\in(\Z/p\Z)}}e\left(\frac{as^2}{p^2}\right)\left(\mathds{1}(a\equiv 0\bmod{p})p + \mathds{1}(a\not\equiv 0 \bmod{p})\mathds{1}(s\equiv 0\bmod{p})p\right)\\
    =& \mathds{1}(a\not\equiv 0 \bmod{p})p + \mathds{1}(a\equiv 0\bmod{p})p\sum_{\substack{s\in(\Z/p\Z)}}e\left(\frac{as^2}{p^2}\right)\\
    =& \mathds{1}(a\not\equiv 0 \bmod{p})p + \mathds{1}(a\equiv 0 \bmod{p},a\not\equiv0\bmod{p^2})\varepsilon_p\left(\frac{a/p}{p}\right) p^{3/2}\\ +& \mathds{1}(a\equiv 0\bmod{p^2})p^2,
    \end{align*}
    where $\varepsilon_p = 1$ if $p\equiv 1\bmod{4}$ and $\varepsilon_p = i$ if $p\equiv 3\bmod{4}$. Doing the same computation for the sum over $u_2$ and multiplying with the above (noting that the Legendre symbol occurring in the $u_2$ case will be $\left(\frac{-a/p}{p}\right)$), we obtain
    \begin{align*}
    \widetilde{F}(p,k) &= p^2\sum_{\substack{a\in(\Z/p^2\Z)\\ a\not\equiv 0 \bmod{p}}}e\left(\frac{-ak}{p}\right) + p^3 \sum_{\substack{a\in(\Z/p\Z)\\ a\not\equiv 0 \bmod{p}}}1 + p^4\\
    & = 2p^3(p-1)
    \end{align*}
    We will obtain the same expression for $\widetilde{G}(p,l)$ and so,
    \[
    |\Omega_{p,f,g}| = \frac{1}{p^4}\sum_{\substack{k,l\in(\Z/p\Z)\\-kl\not\equiv\square\bmod{p}}} 4p^6(p-1)^2 = 2p^3(p-1)^3.
    \]
    Finally, let us bound $|\Omega_{p^2}|$. By Lemma \ref{partialconversetobadresidues}, we have
    \begin{align*}
        |\Omega_{p^2}| &\ll \left|\left\{(u_1,u_2,v_1,v_2)\in\left(\Z/p^2\Z\right): \begin{array}{l} (u_1,u_2),(v_1,v_2)\not\equiv 0\bmod{p},\\ f(u_1,u_2)\equiv0\bmod{p^2}\;\text{or}\;g(v_1,v_2)\equiv0\bmod{p^2}\end{array}\right\}\right|\\
        &\ll p^4\left|\left\{(u_1,u_2)\in\left(\Z/p^2\Z\right)^2: \begin{array}{l} (u_1,u_2)\not\equiv 0\bmod{p},\\ f(u_1,u_2)\equiv0\bmod{p^2}\end{array}\right\}\right|\\ &+ p^4\left|\left\{(v_1,v_2)\in\left(\Z/p^2\Z\right)^2: \begin{array}{l} (v_1,v_2)\not\equiv 0\bmod{p},\\ g(v_1,v_2)\equiv0\bmod{p^2}\end{array}\right\}\right|.
    \end{align*}
    Each of the two cadinalities in the last bound may be computed in the same way, so we only deal with the first. Call it $H(p)$. Using orthogonality and the fact that we may assume $f(u_1,u_2)\equiv u_1^2 -Du_2^2\bmod{p^2}$ for $D=\mathrm{disc}(f)$, we may write,
    \begin{align*}
    p^2 H(p) &= \sum_{a\in(\Z/p^2\Z)}\sum_{\substack{u_1,u_2\in(\Z/p^2\Z)\\ (u_1,u_2)\not\equiv(0,0)\bmod{p}}}e\left(\frac{a(u_1^2-Du_2^2)}{p^2}\right)\\
    &= \sum_{a\in(\Z/p^2\Z)}\sum_{\substack{k,l\in(\Z/p\Z)}}e\left(\frac{ak^2}{p^2}\right)e\left(\frac{akl}{p}\right)\sum_{\substack{m,n\in(\Z/p\Z)\\ (k,m)\not\equiv(0,0)\bmod{p}}}e\left(\frac{-aDm^2}{p^2}\right)e\left(\frac{-aDmn}{p}\right),
    \end{align*}
    the last equality coming from setting $u_1\equiv k+pl\bmod{p^2}$ and $u_2\equiv m+pn\bmod{p^2}$ for $k,l,n,m\in(\Z/p\Z)$. Summing over $n$ and $l$ first we obtain:
    \begin{align*}
        p^2 H(p) &= p^2\sum_{a\in(\Z/p^2\Z)}\sum_{\substack{k,m\in(\Z/p\Z)\\(k,m)\not\equiv(0,0)\bmod{p})}}e\left(\frac{ak^2}{p^2}\right)e\left(\frac{-aDm^2}{p^2}\right)\mathds{1}(am\equiv0\bmod{p})\mathds{1}(ak\equiv0\bmod{p})\\
        &= p^2\sum_{b\in(\Z/p\Z)}\sum_{\substack{k,m\in(\Z/p\Z)\\(k,m)\not\equiv(0,0)\bmod{p})}}e\left(\frac{bk^2}{p}\right)e\left(\frac{-bDm^2}{p}\right),
    \end{align*}
    the second line following by writing $a\equiv bp\bmod{p^2}$ for $b\in(\Z/p\Z)$ using the fact that the conditions $\mathds{1}(am\equiv0\bmod{p})$, $\mathds{1}(ak\equiv0\bmod{p})$ and $(k,m)\not\equiv(0,0)\bmod{p}$ imply that $a$ must be congruent to $0$ modulo $p$. Next we compute the sum over $m$:
    \begin{align*}
    \sum_{\substack{m\in(\Z/p\Z)\\(k,m)\not\equiv(0,0)\bmod{p})}}e\left(\frac{-bDm^2}{p}\right) &= \left(\varepsilon_p \left(\frac{-bD}{p}\right) p^{1/2} - \mathds{1}(k\equiv 0\bmod{p})\right)\mathds{1}(b\not\equiv 0\bmod{p})\\ &+ (p-\mathds{1}(k\equiv 0\bmod{p}))\mathds{1}(b\equiv0\bmod{p}).
    \end{align*}
    Summing this over $k$ then gives
    \begin{align*}
     \left(\left(\frac{D}{p}\right) p - 1\right)\mathds{1}(b\not\equiv 0\bmod{p}) + (p^2-1)\mathds{1}(b\equiv0\bmod{p}).
    \end{align*}
    Thus we obtain,
    \begin{align*}
    p^2 H(p) &= p^2\left(\left(\frac{D}{p}\right)p-1\right)(p-1) + p^2(p^2-1)\\
    &=\left(1+\left(\frac{D}{p}\right)\right)p^4 - \left(1+\left(\frac{D}{p}\right)\right)p^3 \ll p^4.
    \end{align*}
    It follows that $|\Omega_{p^2}| \ll \frac{p^4\cdot p^4}{p^2}\ll p^6$ as required.
\end{proof}

Define $\eta_p$ as follows: set $\eta_p = 0$ if $p| 2\mathrm{disc}(f)\mathrm{disc}(g)$ and if $p\nmid 2\mathrm{disc}(f)\mathrm{disc}(g)$ set
\[
\eta_p = \begin{cases}
    2\;\text{if $p$ splits both $f$ and $g$},\\
    1\;\text{if $p$ splits exactly one of $f$ and $g$},\\
    0\;\text{if $p$ splits neither $f$ nor $g$}.
\end{cases}
\]

We sum up the previous result in the following corollary.

\begin{corollary}\label{keylocallemma}
    For any prime $p$, define $\Omega_p = \Omega'_p \sqcup \Omega_{p^2}$. Then,
    \[
    |\Omega_p| = \eta_p p^7 + O(p^6).
    \]
\end{corollary}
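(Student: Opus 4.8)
The plan is to treat the statement as a repackaging of the preceding lemma: since $\Omega_p = \Omega'_p \sqcup \Omega_{p^2}$ and $\Omega'_p = \Omega_{p,f}\sqcup\Omega_{p,f,g}\sqcup\Omega_{p,g}$ (the disjointness being immediate from the defining congruence conditions, as $g\not\equiv\square\bmod p$ forces $g\not\equiv0\bmod p$ and so on), we have
\[
|\Omega_p| = |\Omega_{p,f}| + |\Omega_{p,f,g}| + |\Omega_{p,g}| + |\Omega_{p^2}|,
\]
and it suffices to extract the leading term in each splitting regime and check it equals $\eta_p p^7$.

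First I would dispose of the finitely many primes $p\mid 2\disc(f)\disc(g)$: for these $\Omega'_p=\emptyset$ and $\eta_p=0$, and whatever convention is taken for $\Omega_{p^2}$ its size is bounded by a constant depending only on $f$ and $g$; since there are only finitely many such primes this contributes $O(1)=O(p^6)$ with implied constant depending on $f,g$, as permitted. For $p\nmid 2\disc(f)\disc(g)$ I would expand $p^3(p-1)^3 = p^6 + O(p^5)$, and, using $\bigl|\bigl(\tfrac{\disc(g)}{p}\bigr)\bigr|\le 1$, deduce
\[
p^3(p-1)^3\Bigl(p-\Bigl(\tfrac{\disc(g)}{p}\Bigr)\Bigr) = p^7 + O(p^6),
\]
and symmetrically with $\disc(f)$; also $2p^3(p-1)^3 = O(p^6)$.

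Then I would split into the three cases defining $\eta_p$. (i) If $p$ splits both $f$ and $g$, then $|\Omega_{p,f}| = |\Omega_{p,g}| = p^7 + O(p^6)$ and $|\Omega_{p,f,g}| = O(p^6)$, so $|\Omega'_p| = 2p^7 + O(p^6) = \eta_p p^7 + O(p^6)$. (ii) If $p$ splits exactly one of them, say $f$, then $|\Omega_{p,g}| = |\Omega_{p,f,g}| = 0$ while $|\Omega_{p,f}| = p^7 + O(p^6)$, so $|\Omega'_p| = p^7 + O(p^6) = \eta_p p^7 + O(p^6)$ (the case where $g$ splits is symmetric). (iii) If $p$ splits neither form, then $\Omega'_p = \emptyset$ and $\eta_p = 0$, so $|\Omega'_p| = 0 = \eta_p p^7 + O(p^6)$. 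In every case, adding $|\Omega_{p^2}| = O(p^6)$ from the lemma leaves the asymptotic unchanged, which gives $|\Omega_p| = \eta_p p^7 + O(p^6)$.

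There is no substantive obstacle here; the only points to keep in view are that the factor $p-\bigl(\tfrac{\disc(g)}{p}\bigr)$ contributes only the clean leading term $p$ (the Legendre symbol being $O(1)$, not something depending on the other form's splitting), and that the cross term $\Omega_{p,f,g}$ — although nonzero exactly when both forms split — is of size $O(p^6)$ and so does not enter the coefficient $\eta_p$; it is the $\Omega_{p,f}$ and $\Omega_{p,g}$ terms alone that account for $\eta_p p^7$.
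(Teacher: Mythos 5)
Your proposal is correct and is precisely the bookkeeping the paper leaves implicit when it writes ``we sum up the previous result in the following corollary'': add the four cardinalities, expand $p^3(p-1)^3\bigl(p\mp 1\bigr)=p^7+O(p^6)$ and $2p^3(p-1)^3=O(p^6)$, and observe that only $\Omega_{p,f}$ and $\Omega_{p,g}$ feed the $p^7$ coefficient, reproducing $\eta_p$ case by case. Your handling of the finitely many ramified primes via the $f,g$-dependent implied constant is also the intended reading.
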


Finally, let $\delta_1(f,g)$, $\delta_2(f,g)$ and $\delta_3(f,g)$ denote the density of primes which split only $f$, only $g$, and both $f$ and $g$ respectively. The following result, which is a direct consequence of the Chebotarev density theorem, gives us the necessary information on these quantities:

\begin{lemma}\label{Chebdense}
    Let $f,g\in\Z[u_1,u_2]$ be binary quadratic forms which do not split as a double line. Then
    \[
    \delta_1(f,g) = \begin{cases}
        0\;\text{if $f$ splits over $\Q$ or if $f$ and $g$ split over the same field},\\
        1/2\;\text{if only $g$ splits over $\Q$},\\
        1/4\;\text{if $f$ and $g$ split over the same quadratic extension of $\Q$},
    \end{cases}
    \]
    \[
    \delta_2(f,g) = \begin{cases}
        0\;\text{if $g$ splits over $\Q$ or if $f$ and $g$ split over the same field},\\
        1/2\;\text{if only $f$ splits over $\Q$},\\
        1/4\;\text{if $f$ and $g$ split over different quadratic extensions of $\Q$},
    \end{cases}
    \]
    \[
    \delta_3(f,g) = \begin{cases}
        1\;\text{if both $f$ and $g$ split over $\Q$},\\
        1/2\;\text{if $f$ and $g$ split over the same quadratic extension, or if only one splits over $\Q$},\\
        1/4\;\text{if $f$ and $g$ split over different quadratic extensions of $\Q$}.
    \end{cases}
    \]
\end{lemma}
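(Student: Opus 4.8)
The plan is to deduce the whole statement from the Chebotarev density theorem applied to the compositum of the splitting fields of $f$ and $g$. The starting point is the elementary dictionary between factorisation of $f$ modulo $p$ and splitting in quadratic fields: for $p\nmid 2\mathrm{disc}(f)$, the form $f$ factors over $\F_p$ into two distinct linear forms — i.e.\ ``$p$ splits $f$'' in the terminology used above — if and only if $\mathrm{disc}(f)$ is a non-zero square modulo $p$, equivalently $\bigl(\tfrac{\mathrm{disc}(f)}{p}\bigr)=1$, equivalently $p$ splits completely in $K_f:=\Q\bigl(\sqrt{\mathrm{disc}(f)}\bigr)$. Here $K_f=\Q$ precisely when $f$ splits over $\Q$, and then every unramified $p$ splits $f$; otherwise $K_f$ is a genuine quadratic field. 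The same holds for $g$ with $K_g:=\Q\bigl(\sqrt{\mathrm{disc}(g)}\bigr)$, and $K_f=K_g$ if and only if $\mathrm{disc}(f)\mathrm{disc}(g)\in(\Q^\times)^2$.

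Next I would put $L:=K_fK_g$, $G:=\Gal(L/\Q)$ — an abelian group of order $1$, $2$ or $4$ — and introduce the subgroups $N_f:=\Gal(L/K_f)$ and $N_g:=\Gal(L/K_g)$ of $G$. For a prime $p$ unramified in $L$ the Artin symbol $\Frob_p$ is a well-defined element of $G$, and by the previous paragraph ``$p$ splits $f$'' $\iff\Frob_p\in N_f$ and ``$p$ splits $g$'' $\iff\Frob_p\in N_g$. Hence the primes splitting $f$ only correspond to $\Frob_p\in N_f\setminus N_g$, those splitting $g$ only to $\Frob_p\in N_g\setminus N_f$, and those splitting both to $\Frob_p\in N_f\cap N_g$. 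Since only finitely many primes are excluded (those ramifying in $L$ or dividing $2\mathrm{disc}(f)\mathrm{disc}(g)$), discarding them changes no density, so the Chebotarev density theorem yields $\delta_1(f,g)=|N_f\setminus N_g|/|G|$, $\delta_2(f,g)=|N_g\setminus N_f|/|G|$ and $\delta_3(f,g)=|N_f\cap N_g|/|G|$.

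It then remains to evaluate these three ratios in the four possible configurations of $(K_f,K_g)$: (i) $K_f=K_g=\Q$, so $G$ is trivial and $N_f=N_g=G$; (ii) exactly one of $K_f,K_g$ equals $\Q$, so $G\cong\Z/2\Z$ with one of $N_f,N_g$ equal to $G$ and the other trivial; (iii) $K_f=K_g$ a genuine quadratic field, so $G\cong\Z/2\Z$ and $N_f=N_g=\{1\}$; (iv) $K_f\neq K_g$ both genuine quadratic, so $L/\Q$ is biquadratic, $G\cong(\Z/2\Z)^2$, and $N_f,N_g$ are distinct subgroups of order $2$ with $N_f\cap N_g=\{1\}$. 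In each case one simply reads off the three ratios, obtaining values in $\{0,\tfrac14,\tfrac12,1\}$, and then translates the descriptions ``$f$ splits over $\Q$'', ``only $g$ splits over $\Q$'', ``$f$ and $g$ split over the same / over different quadratic extensions'' into the cases (i)--(iv). I do not anticipate a genuine obstacle: the argument is a textbook combination of the quadratic residue symbol with Chebotarev, and the only thing requiring care is the bookkeeping of the degenerate possibilities — allowing $K_f$ or $K_g$ to be $\Q$ when the discriminant is already a rational square, and recognising that the split between cases (iii) and (iv) is governed precisely by whether $\mathrm{disc}(f)\mathrm{disc}(g)$ is a square.
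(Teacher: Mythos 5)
Your proof is correct and matches the paper's intent exactly: the paper gives no argument beyond the remark that the lemma is a direct consequence of the Chebotarev density theorem, and your derivation via the compositum $L=K_fK_g$ with $K_f=\Q(\sqrt{\mathrm{disc}(f)})$, $K_g=\Q(\sqrt{\mathrm{disc}(g)})$, the subgroups $N_f=\Gal(L/K_f)$, $N_g=\Gal(L/K_g)$, and the formulas $\delta_1=|N_f\setminus N_g|/|G|$, $\delta_2=|N_g\setminus N_f|/|G|$, $\delta_3=|N_f\cap N_g|/|G|$ is precisely the standard way to fill in the details. One thing you will notice when you carry out the final bookkeeping: the third case of $\delta_1$ as printed in the statement, ``$1/4$ if $f$ and $g$ split over the \emph{same} quadratic extension'', must be a typo for ``\emph{different} quadratic extensions'' (compare the third case of $\delta_2$, which is stated correctly); otherwise it contradicts the first case, which correctly assigns $\delta_1=0$ whenever $K_f=K_g$. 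Indeed it is your biquadratic case (iv), not case (iii), that gives $|N_f\setminus N_g|/|G|=1/4$, while case (iii) with $K_f=K_g$ a genuine quadratic field gives $N_f=N_g=\{1\}$ and hence $\delta_1=0$.
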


\section{Application of the large sieve}
In this section we will apply the large sieve to prove Theorem \ref{acheightbound}.

\subsection{Preliminaries}
We will use the following version of the large sieve:

\begin{lemma}\label{LARGEsieve} Let
\[
X = \{\b{n}\in\Z^r: M_j\leq n_j\leq M_j + N_j,\;\text{for}\;1\leq j\leq r\}
\]
and suppose $Y_p \subseteq (\Z/p^s\Z)^r$ for all $p$ where $s\in\N$ is fixed. Then
\[
\{\b{n}\in X: \b{n}\bmod{p^s}\not\in Y_p,\;\text{for all}\; p\leq L\} \ll \frac{\prod_{i=1}^{r}\left(\sqrt{N_j}+L^{s}\right)^2}{F(L)}
\]
where
\[
F(L) = \sum_{m\leq L}\mu^2(m)\prod_{p|m}\left(\frac{|Y_p|}{p^{rs}-|Y_p|}\right).
\]
\end{lemma}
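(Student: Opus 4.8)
The plan is to obtain this from the analytic (multidimensional) large sieve inequality by the classical route that converts it into an arithmetic large sieve, paying attention to the two non-standard features here: there are $r\ge1$ variables, and the moduli involved are the prime powers $p^{s}$ rather than primes. Throughout $r$ and $s$ are fixed, so implied constants may depend on them.

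First I would set up the exponential sum. Let $\mathcal Z\subseteq X$ be the set being counted and $Z=\#\mathcal Z$; we may assume $Z\ge1$, as otherwise there is nothing to prove. For $\bb\alpha\in(\R/\Z)^{r}$ put $T(\bb\alpha)=\sum_{\b n\in\mathcal Z}e(\bb\alpha\cdot\b n)$, so that $T(\b0)=Z$ and $\sum_{\b n\in\mathcal Z}1=Z$. Consider the finite set of rationals
\[
\mathcal F=\bigl\{\,\b a/m^{s}\in(\R/\Z)^{r}\ :\ m\le L\ \text{squarefree},\ \b a\in(\Z/m^{s}\Z)^{r},\ \gcd(a_{1},\dots,a_{r},m)=1\,\bigr\}.
\]
Every coordinate of every point of $\mathcal F$ has denominator at most $L^{s}$, so any two distinct points of $\mathcal F$ differ, in at least one coordinate, by at least $L^{-2s}$ in $\R/\Z$; thus $\mathcal F$ is $L^{-2s}$-separated in the sup-metric on $(\R/\Z)^{r}$. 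The multidimensional analytic large sieve inequality, applied to the box $X$ and the point set $\mathcal F$, then gives
\[
\sum_{\bb\alpha\in\mathcal F}\bigl|T(\bb\alpha)\bigr|^{2}\ \ll\ \prod_{j=1}^{r}\bigl(N_{j}+L^{2s}\bigr)\cdot Z\ \le\ \prod_{j=1}^{r}\bigl(\sqrt{N_{j}}+L^{s}\bigr)^{2}\cdot Z .
\]

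The heart of the matter is the matching lower bound $\sum_{\bb\alpha\in\mathcal F}|T(\bb\alpha)|^{2}\ge Z^{2}F(L)$; granting it, combining with the previous display and dividing by $ZF(L)$ (positive, since $F(L)\ge1$ from the term $m=1$) yields $Z\ll\prod_{j}(\sqrt{N_{j}}+L^{s})^{2}/F(L)$, as required. To prove this lower bound one fixes a squarefree $m\le L$; the hypothesis forces $\mathcal Z$, reduced modulo $m^{s}=\prod_{p\mid m}p^{s}$, to meet at most $\prod_{p\mid m}(p^{rs}-|Y_{p}|)$ of the $m^{rs}$ residue classes, and one wants to turn this into a lower bound for the part of $\sum_{\bb\alpha\in\mathcal F}|T|^{2}$ coming from denominator $m^{s}$, using orthogonality of the additive characters of $(\Z/m^{s}\Z)^{r}$ together with Cauchy--Schwarz over the occupied classes, so that summing over $m$ rebuilds $Z^{2}\sum_{m\le L}\mu^{2}(m)\prod_{p\mid m}\tfrac{|Y_{p}|}{p^{rs}-|Y_{p}|}=Z^{2}F(L)$. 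When $r=1$ and the moduli are squarefree this is exactly Montgomery's lemma, proved termwise by induction on the number of prime factors; passing to $r$ variables is routine (one reads ``primitive modulo $m^{s}$'' as ``content coprime to $m$'' and uses that $\mathcal Z\bmod p^{s}$ omits $|Y_{p}|$ of the $p^{rs}$ classes).

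The main obstacle is the prime-power aspect. When $s>1$ the per-modulus bound $\sum^{*}_{\b a\bmod m^{s}}|T(\b a/m^{s})|^{2}\ge Z^{2}\prod_{p\mid m}\tfrac{|Y_{p}|}{p^{rs}-|Y_{p}|}$ can fail (for instance the two-element set $\{0,1\}$, viewed modulo $p^{2}$, already violates it), so Montgomery's lemma cannot be applied term by term; one must instead work with the full sum over $m\le L$, exploiting that $\mathcal Z$ cannot be concentrated modulo $p^{s-1}$ for all $p$ simultaneously. A cleaner and safer option, which I would ultimately take in the write-up, is to quote a ready-made multidimensional arithmetic large sieve with prime-power moduli from the literature, after which only the two displays above remain to be checked.
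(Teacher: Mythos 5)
Your ultimate plan — combine the multidimensional analytic large sieve (Huxley's Theorem~1) with an abstract arithmetic large sieve over the local groups $(\Z/p^s\Z)^r$, by quoting ready-made results — coincides with the paper's proof, which simply cites Theorem~1 of Huxley followed by Proposition~2.3 of Kowalski. The two auxiliary displays you do verify are also fine, since $\prod_j(N_j+L^{2s})\le\prod_j(\sqrt{N_j}+L^s)^2$ and the $m=1$ term shows $F(L)\geq 1$.

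However, the ``prime-power obstacle'' you identify en route is not a genuine obstacle; it is an artefact of an incorrect choice of the frequency set $\mathcal F$. You impose $\gcd(a_1,\dots,a_r,m)=1$, i.e.\ you keep only those Farey points whose numerator has content coprime to $m$. In the abstract sieve with local groups $G_p=(\Z/p^s\Z)^r$, the frequencies attached to a squarefree $m$ are all characters of $\prod_{p\mid m}G_p$ that are non-trivial on each factor; translated into Farey fractions this is the strictly weaker requirement that $\b a\not\equiv\b 0\bmod{p^s}$ for every $p\mid m$, which permits every coordinate of $\b a$ to be divisible by $p$ so long as not all of them are divisible by $p^s$. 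This larger set is still $L^{-2s}$-separated (two distinct such points, whose coordinates all have denominator dividing $L^s$, differ in some coordinate by a non-zero rational with denominator at most $L^{2s}$), so Huxley's bound is unchanged, and with the enlarged $\mathcal F$ the Montgomery lemma \emph{does} hold termwise: for any finite abelian group $G$ and any avoided set $\Omega\subseteq G$, Parseval plus Cauchy--Schwarz over the at most $|G|-|\Omega|$ occupied classes gives $\sum_{\chi\in\widehat G\setminus\{1\}}\bigl|\sum_{z\in\mathcal Z}\chi(z)\bigr|^2\ge\tfrac{|\Omega|}{|G|-|\Omega|}|\mathcal Z|^2$ (with $\chi$ evaluated on the reduction of $z$), applied here with $G=(\Z/p^s\Z)^r$; the multiplicative extension to squarefree $m$ is exactly the content of Kowalski's Proposition~2.3. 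Indeed your candidate counterexample $\mathcal Z=\{0,1\}$, $r=1$, $s=2$ is an \emph{equality} case for the correct bound: the terms with $p\nmid a$ contribute $2(p^2-p)$, and the terms you omitted, namely $p\mid a$ with $p^2\nmid a$, contribute a further $2(p-2)$, for a total of $2(p^2-2)=\tfrac{p^2-2}{2}\cdot 2^2$. So the fallback you adopt is the right conclusion and matches the paper, but the assertion that the term-by-term Montgomery inequality fails for prime-power moduli should be withdrawn: it is the frequency set, not Montgomery's lemma, that needed adjustment.
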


\begin{proof}
    This particular version of the large sieve is slightly different from versions used in similar problems - in \cite{BLS} they use a version with skewed boxes but where $Y_p$ is only a subset of $\Z/p\Z$, while the standard version of the sieve used for such problems, given in \S6 of \cite{SerreConics}, does not allow for skewed boxes. Both of these, and that given in Lemma \ref{LARGEsieve}, may be seen to be an application of Theorem $1$ of \cite{Huxely} followed by Proposition $2.3$ of \cite{KowalskiLargeSieve}.
\end{proof}

To prove Theorems \ref{acheightbound} we will apply Lemma \ref{LARGEsieve} with $Y_p = \Omega_p$ as given in \S\ref{Localdensities}, so $s=2$ and $r=4$. The following lemma computes lower bounds for the expression $F(L)$:

\begin{lemma}\label{Savinglemma}
    For notation as above, we have
    \[
    \sum_{m\leq L}\mu^2(m)\prod_{p|m}\left(\frac{|\Omega_p|}{p^{8}-|\Omega_p|}\right) \sim c(\log L)^{\Delta(\pi)}
    \]
    where $\Delta(\pi)$ is given in Proposition \ref{DELTA} and $c>0$ is some constant.
\end{lemma}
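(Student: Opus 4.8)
The plan is to recognise the sum as the partial sum of the Dirichlet coefficients of an Euler product, to factor that product through $\zeta$ and two quadratic Dirichlet $L$-functions, and to read off the asymptotics by the Selberg--Delange method. Write $g(p)=|\Omega_p|/(p^8-|\Omega_p|)$ and $a_n=\mu^2(n)\prod_{p\mid n}g(p)$, so the quantity of interest is $\sum_{n\le L}a_n$, the partial sum of
\[
D(s)=\sum_{n\ge1}\frac{a_n}{n^s}=\prod_p\left(1+\frac{g(p)}{p^s}\right).
\]
By Corollary \ref{keylocallemma} we have $g(p)=\eta_p/p+O(p^{-2})$ for every prime $p$, with $\eta_p\in\{0,1,2\}$; in particular $D(s)$ converges absolutely for $\Re s>0$, all $a_n\ge0$, and $a_n\ll_\epsilon n^{\epsilon}$.

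The first substantive step is to isolate the singularity of $D(s)$ at $s=0$. For $p\nmid2\,\mathrm{disc}(f)\,\mathrm{disc}(g)$, the integer $\eta_p$ is the number of the two forms $f,g$ that split modulo $p$; as neither is a double line this gives $\eta_p=1+\tfrac12\legendre{\mathrm{disc}(f)}{p}+\tfrac12\legendre{\mathrm{disc}(g)}{p}$. A standard manipulation of the Euler product, using $g(p)=\eta_p/p+O(p^{-2})$ together with the expansions $\sum_pp^{-1-s}=\log\zeta(s+1)+O(1)$ and $\sum_p\chi(p)p^{-1-s}=\log L(s+1,\chi)+O(1)$ (both valid for $\Re s>-\tfrac12$), then yields
\[
D(s)=\zeta(s+1)\,L\bigl(s+1,\chi_{\mathrm{disc}(f)}\bigr)^{1/2}\,L\bigl(s+1,\chi_{\mathrm{disc}(g)}\bigr)^{1/2}\,E(s),
\]
where $E$ is holomorphic and non-vanishing for $\Re s>-\tfrac12$. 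Now $L(s+1,\chi_{\mathrm{disc}(f)})$ is a genuine (primitive) quadratic $L$-function when $f$ does not split over $\Q$, whereas it equals $\zeta(s+1)$ up to finitely many Euler factors when $f$ does split over $\Q$, and similarly for $g$; absorbing these extra $\zeta$-contributions we obtain $D(s)=\zeta(s+1)^{\Delta(\pi)}G(s)$, where the exponent is $\Delta(\pi)=1+\tfrac12\varepsilon_f+\tfrac12\varepsilon_g$ with $\varepsilon_f=1$ if $f$ splits over $\Q$ and $0$ otherwise (this matches Proposition \ref{DELTA}; equivalently the exponent is $2\delta_3(f,g)+\delta_1(f,g)+\delta_2(f,g)$ in the notation of Lemma \ref{Chebdense}), and $G$ collects the non-principal $L$-factors to the power $1/2$ together with $E(s)$. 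By the classical zero-free regions for $\zeta$ and for fixed quadratic Dirichlet $L$-functions, $G$ is holomorphic and non-vanishing in a region of the form $\Re s\ge-c_0/\log(2+|\Im s|)$; moreover $G(s)>0$ for real $s\ge0$ because every Euler factor of $D(s)\zeta(s+1)^{-\Delta(\pi)}$ is positive there, so $G(0)>0$.

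Given this factorisation, the conclusion is a standard Tauberian deduction via the Selberg--Delange method. Putting $\tilde a_n=na_n$ (so $\tilde a_n\ge0$ and $\tilde a_n\ll_\epsilon n^{\epsilon}$), we have $\sum_n\tilde a_nn^{-w}=D(w-1)=\zeta(w)^{\Delta(\pi)}G(w-1)$, which has a singularity of type $\zeta(w)^{\Delta(\pi)}$ at $w=1$ with an otherwise holomorphic, non-vanishing factor in a zero-free region; hence $\sum_{n\le x}\tilde a_n\sim\frac{G(0)}{\Gamma(\Delta(\pi))}x(\log x)^{\Delta(\pi)-1}$, and partial summation yields
\[
\sum_{n\le L}a_n=\sum_{n\le L}\frac{\tilde a_n}{n}\sim\frac{G(0)}{\Gamma(\Delta(\pi)+1)}(\log L)^{\Delta(\pi)},
\]
which is the claim, with $c=G(0)/\Gamma(\Delta(\pi)+1)>0$.

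I expect the factorisation step to be the main obstacle: one must check that once $\zeta(s+1)^{\Delta(\pi)}$ and the factors $L(s+1,\chi)^{1/2}$ have been removed the residual Euler product $E(s)$ really converges in a half-plane strictly to the left of $\Re s=0$ — this is precisely where the error term $g(p)=\eta_p/p+O(p^{-2})$ of Corollary \ref{keylocallemma} is used — and that the half-integral exponents that appear (whenever $\Delta(\pi)=\tfrac32$, or whenever one of $\mathrm{disc}(f),\mathrm{disc}(g)$ fails to be a square) cause no difficulty, which is what forces one to run the full Selberg--Delange machinery (a Hankel-contour argument) rather than a bare residue computation. By contrast, the formula for $\eta_p$ and the verification that $1+\tfrac12\varepsilon_f+\tfrac12\varepsilon_g$ matches $\Delta(\pi)$ are elementary.
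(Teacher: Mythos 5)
Your proof is correct, but it takes a genuinely different route from the paper. The paper defines $h(m)=\mu^2(m)\prod_{p\mid m}\bigl(|\Omega_p|/(p^8-|\Omega_p|)\bigr)$ and invokes Theorem~A.5 of Friedlander--Iwaniec's \emph{Opera de cribro}, a mean value theorem for nonnegative multiplicative functions whose hypotheses are Mertens-type estimates. The decisive input is $\sum_{p\le x}h(p)\log p=\bigl(\sum_i\widetilde\eta_i\delta_i(f,g)\bigr)\log x+O(1)$, obtained from Corollary~\ref{keylocallemma} together with the Chebotarev-derived densities of Lemma~\ref{Chebdense}; one then verifies that the resulting exponent $\sum_i\widetilde\eta_i\delta_i(f,g)$ equals $\Delta(\pi)$ case by case. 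You instead identify $\eta_p=1+\tfrac12\legendre{\mathrm{disc}(f)}{p}+\tfrac12\legendre{\mathrm{disc}(g)}{p}$, factor the Dirichlet series $D(s)=\prod_p(1+g(p)p^{-s})$ through $\zeta(s+1)$ and $L(s+1,\chi_{\mathrm{disc}(f)})^{1/2}L(s+1,\chi_{\mathrm{disc}(g)})^{1/2}$, absorb the principal-character factors into $\zeta$ to expose a singularity of type $\zeta(s+1)^{\Delta(\pi)}$, and run Selberg--Delange. Your identity for $\eta_p$ and the bookkeeping showing that the resulting exponent matches $\Delta(\pi)$ are correct, as is the final partial summation step $\sum_{n\le L}a_n\sim\frac{G(0)}{\Gamma(\Delta(\pi)+1)}(\log L)^{\Delta(\pi)}$. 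The trade-off: the paper's argument is shorter at the point of use because the analytic machinery is delegated to a single citation and needs only a Mertens estimate with no zero-free region; your argument is more explicit about the analytic structure (one literally sees which $L$-functions govern the count) and automatically yields a stronger error term, at the cost of having to justify the half-integer branch cut via a Hankel contour and appeal to zero-free regions for $\zeta$ and the two quadratic $L$-functions, as you rightly flag. Both correctly reduce the exponent computation to the same Chebotarev input.
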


\begin{proof}
    Define the multiplicative function $h(m) = \mu^2(m)\prod_{p|m}\left(\frac{|\Omega_p|}{p^{8}-|\Omega_p|}\right)$. We aim to compute
    \[
    \sum_{m\leq L} h(m)
    \]
    for which we will apply Theorem $A.5$ from \cite{Cribro}. Upon appealing to Corollary \ref{keylocallemma}, we have
    \begin{align*}
    \sum_{p\leq x} h(p)\log p &= \sum_{p\leq x} \frac{\eta_p\log p}{p} + O\left(\sum_{p\leq x} \frac{\log p}{p^2}\right)\\
    &= \sum_{\substack{p\leq x\\ p\;\text{only splits $f$}}}\hspace{-5pt}\frac{\eta_p\log p}{p} + \sum_{\substack{p\leq x\\ p\;\text{only splits $g$}}}\hspace{-5pt}\frac{\eta_p\log p}{p} + \sum_{\substack{p\leq x\\ p\;\text{splits both $f$ and $g$}}}\hspace{-7.5pt}\frac{\eta_p\log p}{p} + O(1)\\
    &= \left(\sum_{i=1}^{3}\widetilde{\eta}_i\delta_i(f,g)\right) \log x + O(1)
    \end{align*}
    where
    \begin{equation}\label{etatildedef}
    \widetilde{\eta}_i = \begin{cases}
        1\; \text{if}\; i=1,2\\
        2\;\text{if}\; i=3.
    \end{cases}
    \end{equation}
    This follows from the Chebotarev density theorem and Lemma \ref{Chebdense} and gives condition $(A.15)$ of Theorem $A.5$ of \cite{Cribro}. We may similarly prove condition $(A.16)$ and we clearly have,
    \[
    \sum_{p}h(p)^2\log p \ll \sum_{p} \frac{\log p}{p^2} < \infty,
    \]
    which gives condition $(A.17)$. Thus Theorem $A.5$ of \cite{Cribro} implies that
    \[
    \sum_{m\leq L} h(m) = c_h(\log L)^{k} + O((\log L)^{k-1})
    \]
    for some constant $c_h>0$ where
    \[
    k = \sum_{i=1}^{3}\widetilde{\eta}_i\delta_i(f,g).
    \]
    To conclude we note that by \eqref{etatildedef} and Lemma \ref{Chebdense}, $k=\Delta(\pi)$ regardless of the splitting behaviour of $f$ and $g$.
\end{proof}

\subsection{Proof of Theorem \ref{acheightbound}}
We will follow the method of \cite{BLS}. The key difference is that, due to the definition of our residue sets, we do not need to reduce to square-free variables. Define
\[
N^{*}(T_1,T_2,S_1,S_2) = \left\{(\b{u},\b{v})\in\Z^2_{prim}\times\Z^2_{prim}: \begin{array}{l} T_i\leq u_i\leq 2T_i,\; S_i\leq v_i\leq 2S_i\;\forall i\in\{0,1\}\\ (\b{u},\b{v})\bmod{p^2}\not\in\Omega_p\;\forall p\leq R^{1/100} \end{array} \right\}.
\]
where $R\coloneqq \min(T_1,T_2,S_1,S_2)$. Then we have
\begin{equation}\label{diadicsplitting}
N_{f,g}(B) \ll \sum_{T_1,T_2,S_1,S_2} N^{*}(T_1,T_2,S_1,S_2)
\end{equation}
where the sum is over those $T_1,T_2,S_1,S_2$ which are powers of $2$ and satisfy the height condition
\[
\max(T_1,T_2)\cdot\max(S_1,S_2)\leq \sqrt{B}.
\]
Now we may apply Lemma \ref{LARGEsieve} and Lemma \ref{Savinglemma} to each $N^{*}$. This will yield,
\[
N^{*}(T_1,T_2,S_1,S_2) \ll \min\left(T_1T_2S_1S_2, \frac{T_1T_2S_1S_2}{(\log R)^{\Delta(\pi)}} \right) \ll \frac{T_1T_2S_1S_2}{1 + (\log R)^{\Delta(\pi)}}.
\]
By symmetry it is enough to sum \eqref{diadicsplitting} over the region $T_1\leq T_2$, $S_1\leq S_2$. Summing over $T_2\leq 4 \sqrt{B}/S_2$ we obtain
\[
N_{f,g}(B) \ll \sqrt{B}\sum_{T_1,S_1,S_2\leq \sqrt{B}}\frac{T_1S_1}{1+(\log R)^{\Delta(\pi)}}.
\]
Now, if $T_1\leq S_1$, we first sum over $S_1\leq S_2$ and then $S_2\leq 4\sqrt{B}/T_1$ to obtain
\[
N_{f,g}(B) \ll B\sum_{T_1\leq \sqrt{B}}\frac{1}{1+(\log T_1)^{\Delta(\pi)}} \ll B\sum_{j\leq \log_2 \sqrt{B}} \frac{1}{1+j^{\Delta(\pi)}}.
\]
Performing this sum over $j$ will give the desired bounds upon noting that $\Delta(\pi) \geq 3/2$ if at least one of $f$ and $g$ split over $\Q$ and $\Delta(\pi)=1$ if neither split over $\Q$. It remains to consider the region where $S_1\leq T_1$. Here we sum over $T_1\leq 4\sqrt{B}/S_2$ and then $S_2\geq S_1$ to obtain
\[
N_{f,g}(B) \ll B\sum_{S_1\leq \sqrt{B}} \frac{1}{1+(\log S_1)^{\Delta(\pi)}} \ll B\sum_{j\leq \log_2 \sqrt{B}}\frac{1}{1+j^{\Delta(\pi)}}
\]
which again gives the desired bound.

\section{Bounding of thin sets}
Finally we will prove the bound given in Theorem \ref{thinsetbound}. Part $(1)$ is trivial. For part $(2)$, suppose $U\in\P^{1}(\Q)$ is such that $f(u)=n^2$ for some $n\in\Z$. Then for any $v\in\P^{1}(\Q)$, with $H((U,v))\leq B$, $(U,v)\in N_{\textrm{thin},1}(B)$. There are $\gg B$ such $v\in\P^1(\Q)$ for fixed $U$, giving the lower bound. The upper bound is implied by Theorem \ref{acheightbound} if at least one of $f$ or $g$ splits over $\Q$, but in the case that neither of them do we want to prove a tighter bound. It is enough to bound
\[
N_{1,thin}^{f}(B) = \left\{(\b{u},\b{v})\in\Z_{prim}^{2}\times\Z_{prim}^{2}: f(u_1,u_2)=\square,\;\max\{|u_1|,|u_2|\}^2\cdot\max\{|v_1|,|v_2|\}^2\leq B\right\}
\]
and
\[
N_{1,thin}^{g}(B) = \left\{(\b{u},\b{v})\in\Z_{prim}^{2}\times\Z_{prim}^{2}: g(v_1,v_2)=\square,\;\max\{|u_1|,|u_2|\}^2\cdot\max\{|v_1|,|v_2|\}^2\leq B\right\}.
\]
The same argument will bound both, so let us focus on $N^{f}_{1,thin}(B)$. We have
\[
N^{f}_{1,thin}(B) = \sum_{\substack{(v_1,v_2)\in\Z_{prim}^2\\ |v_1|,|v_2|\leq B^{1/2}}} \#\left\{(u_1,u_2,y)\in\Z_{prim}^3: \begin{array}{l} f(u_1,u_2)=y^2,\\|y|^2\cdot\max\{|v_1|,|v_2|\}^2\leq 3\|f\|B\\ \max\{|u_1|,|u_2|\}^2\cdot\max\{|v_1|,|v_2|\}^2\leq B \end{array}\right\}.
\]
Here we use $\|f\|$ to denote the maximum of the absolute value of the coefficients. We will bound the inner quantities using the following result:
\begin{lemma}\label{BHBbound}
    Let $q(x_1,x_2,x_3)$ be a non-singular integral ternary quadratic form with matrix $\b{M}$. Set $D=\det{\b{M}}$ and set $D_0$ to be the highest common factor of the $2\times 2$ minors of $\b{M}$. Then there are $$O\left(1+\left(\frac{L_1L_2L_3(D_0)^{3/2}}{D}\right)^{1/3}\right)\tau(D)$$ primitive integer solutions to $q(x_1,x_2,x_3)=0$ satisfying $|x_i|\leq L_i$ for $1\leq i\leq 3$.
\end{lemma}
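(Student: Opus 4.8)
The statement is essentially a uniform bound, with dependence on the invariants $D$ and $D_0$ tracked explicitly, for the number of primitive zeros of a non-singular ternary quadratic form in a box. This is a classical-style lemma, and I would prove it following the circle-of-ideas of Heath-Brown's work on counting points on quadrics (and the subsequent refinements by Browning--Heath-Brown), specialised to the ternary case. The key geometric input is that the zero locus $\{q=0\}$ is a conic in $\mathbb{P}^2$; after a linear change of variables over $\mathbb{Q}$ it is a Pell-type or split conic, and its integral points (if any) lie on $O(\tau(D))$ families parametrised by a binary quadratic form of bounded discriminant. The counting of points of such a parametrisation in a box of dimensions $L_1,L_2,L_3$ is then a lattice-point count, which produces the main term $\big(L_1L_2L_3 D_0^{3/2}/D\big)^{1/3}$.

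**Key steps, in order.** First I would reduce to the case where $q$ actually has a non-trivial zero (otherwise the count is $0$ and there is nothing to prove), and normalise: by the theory of ternary quadratic forms over $\mathbb{Q}_p$ and the Hasse principle, a zero exists only under congruence conditions controlled by $D$, and any primitive zero can be moved by a unimodular change of variables so that $q$ is equivalent to a form of the shape $x_1 x_3 = Q(x_2)$ or, more precisely, so that the conic is parametrised by $(x_1,x_2,x_3) = (\lambda_1(s,t),\lambda_2(s,t),\lambda_3(s,t))$ with $\lambda_i$ binary quadratic forms whose coefficients divide $D/D_0$ (this is where $D_0$, the gcd of the $2\times 2$ minors, enters: it measures the imprimitivity of the dual lattice). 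The number of distinct such parametrisations, coming from the different ways the zero set meets the coordinate hyperplanes and from the factorisation of $D$, is $O(\tau(D))$; this accounts for the divisor-function factor. Second, for a single parametrisation, I would count pairs $(s,t)$ with $\gcd(s,t)=1$ such that $|\lambda_i(s,t)|\leq L_i$ for $i=1,2,3$. Each inequality $|\lambda_i(s,t)|\leq L_i$ cuts out a region in the $(s,t)$-plane that is contained in a union of $O(1)$ sectors of area $O(L_i/\sqrt{|{\rm disc}\,\lambda_i|})$, and since ${\rm disc}\,\lambda_i$ is (up to bounded factors) $D/D_0$ while the leading coefficients are $\gg D_0/{\rm something}$, intersecting the three regions and using the trivial bound "number of lattice points $\ll 1 + {\rm area}$ when the region is convex-ish" — or more safely a geometry-of-numbers argument bounding lattice points in a product region by $1 + ({\rm product\ of\ the\ three\ one-dimensional\ lengths})^{1/3}$ after balancing — yields the bound $1 + (L_1L_2L_3 D_0^{3/2}/D)^{1/3}$. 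Summing over the $O(\tau(D))$ parametrisations finishes the argument.

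**Main obstacle.** The real work is the second step: getting the exponent $1/3$ and the precise power $D_0^{3/2}/D$ correct. Naively bounding the lattice points satisfying all three constraints $|\lambda_i(s,t)| \le L_i$ by the area of any one region is too weak; one must exploit all three simultaneously. The trick (as in Heath-Brown's determinant method heuristic, or simply in the elementary "three boxes" lemma) is that if $|\lambda_1|\le L_1$, $|\lambda_2|\le L_2$, $|\lambda_3|\le L_3$ hold at $(s,t)$ then a suitable integral combination $\mu = a_1\lambda_1 + a_2\lambda_2 + a_3\lambda_3$ is both $\ll \max L_i$ and has controlled size $\gg |s|^2$ or $\gg |t|^2$ along generic directions, which forces $(s,t)$ into a box of side $O((L_1L_2L_3)^{1/6}(D_0^{3/2}/D)^{1/6})$ in each coordinate — squaring and multiplying gives the claimed count. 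Making this rigorous requires being careful about the degenerate directions (where $\lambda_i$ vanishes) and about the coprimality normalisation; those are the places I expect to spend the most effort, and where one typically invokes a clean statement such as Lemma 3 of Browning--Heath-Brown's "Plane curves" paper or an explicit ternary version, quoting it rather than reproving it.
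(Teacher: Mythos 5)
The paper does not prove this lemma at all: its entire ``proof'' is the one sentence ``This is the three variable case of \cite[Corollary 2]{BH-B3},'' i.e.\ a citation to Browning and Heath-Brown's \emph{Counting rational points on hypersurfaces}, specialised to $n=3$. Your proposal instead tries to reconstruct the underlying argument from scratch, and so takes a genuinely different route, even though you eventually concede that you would ``quote'' a technical lemma for the hardest step.

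Beyond that mismatch, your sketch as written is not yet a proof, and in a few places it is not obviously aligned with what the cited result actually does. First, Browning--Heath-Brown establish the uniform bound with the $D$, $D_0$ and $\tau(D)$ dependences by a lattice-point argument in the spirit of the determinant method --- partitioning primitive solutions into a controlled number of lattice cosets and then counting short vectors --- rather than by an explicit parametrisation of the conic by binary quadratic forms. The parametrisation approach you describe is classical and can in principle be pushed through for ternary forms, but the claim that the solutions fall into $O(\tau(D))$ parametrised families with discriminants tied to $D/D_0$, and that this yields exactly the $D_0^{3/2}/D$ normalisation, is precisely the part you would need to prove, not assert; as stated it is a heuristic. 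Second, the ``three boxes'' step that is supposed to produce the exponent $1/3$ is the crux, and your account of it is circular: you posit a combination $\mu = a_1\lambda_1 + a_2\lambda_2 + a_3\lambda_3$ that is ``both $\ll \max L_i$ and $\gg |s|^2$ or $\gg |t|^2$ along generic directions,'' but you do not produce such a $\mu$, explain why one exists uniformly in the form, or handle the ``degenerate directions'' you yourself flag. Since the whole content of the lemma is the uniform and sharp exponent, this gap is fatal to the sketch being a proof. If you want a self-contained argument rather than a citation you would need to either carry out the lattice-point argument of Browning--Heath-Brown in the ternary case, or make the parametrisation route fully rigorous, including the discriminant bookkeeping; but the efficient move, and what the paper does, is simply to cite Corollary~2 of \cite{BH-B3}.
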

This is the three variable case of \cite[Corollary 2]{BH-B3}. Here we have $D,D_0$ and $\tau(D)$ are $O(1)$. Thus we have,
\[
N^{f}_{1,thin}(B) \ll \sum_{\substack{(v_1,v_2)\in\N^2\\ v_1,v_2\leq B^{1/2}}} \left(1+\left(\frac{B^{3/2}}{\max\{|v_1|,|v_2|\}^3}\right)^{1/3}\right) \ll B + B^{1/2}\sum_{\substack{(v_1,v_2)\in\N\\ v_1,v_2\leq B^{1/2}}} \frac{1}{v_1^{1/2}v_2^{1/2}}\ll B.
\]

For $(3)$ we first note that we have $f(u_1,u_2)g(v_1,v_2)=\square$ if and only if there exist integers $z,y_1,y_2\in\Z^2$ such that $z$ is square-free and $f(u_1,u_2)=zy_1^2$ and $g(v_1,v_2) = zy_2^2$. We can furthermore assert that
\[
zy_1^2 \ll \|(u_1,u_2)\|^2\;\text{and}\;zy_2^2\ll\|(v_1,v_2)\|^2.
\]
Thus, if $H_2((u,v))\leq B$, we will have
$z^2y_1^2y_2^2 \leq B$. Now, by fixing $z$, we may write,
\[
N_{2,\textrm{thin}}(B) \ll \hspace{-5pt} \sum_{\substack{z\ll B^{1/2}\\ z\;\textrm{sqrf}}} \hspace{-5pt}\#\left\{(u_1,u_2,y_1),(v_1,v_2,y_2)\in\Z_{prim}^3: \begin{array}{l} f(u_1,u_2) = zy_1^2,\;g(v_1,v_2) = zy_2^2,\\
\max\{|u_1|,|u_2|\}^2\cdot\max\{|v_1|,|v_2|\}^2\leq B,\\ |y_1|^2\cdot|y_2|^2 \leq B/z^2 
\end{array}\right\}.
\]
The fact that we are counting over points in $\Z_{prim}^3$ follows from the fact that $(u_1,u_2)\in\Z_{prim}^2$ and $(v_1,v_2)\in\Z_{prim}^2$, since these pairs represent points in $\P^1(\Q)$. We once again use diadic boxes to handle the height conditions: let $T,S,R_1,R_2$ run over powers of $2$ such that
\begin{equation}\label{diadicheightconditions2}
TS \leq B^{1/2}\;\text{and}\; R_1R_2\leq B^{1/2}/z.
\end{equation}
Then define
\[
N^{f}_{2,\textrm{thin}}(T,R_1) = \#\left\{(u_1,u_2,y_1)\in\Z_{prim}^3: \begin{array}{l} f(u_1,u_2)=zy_1^2,\\
T<\max\{|u_1|,|u_2|\}\leq 2T,\\
R_1<|y_1|\leq 2R_1\end{array}\right\}
\]
and
\[
N^{g}_{2,\textrm{thin}}(S,R_2) = \#\left\{(v_1,v_2,y_2)\in\Z_{prim}^3: \begin{array}{l} g(v_1,v_2)=zy_2^2,\\
S<\max\{|v_1|,|v_2|\}\leq 2S,\\
R_2<|y_2|\leq 2R_2\end{array}\right\}.
\]
Thus we may write,
\begin{equation}\label{diadicsplittingcheckpoint1}
    N_{2,\textrm{thin}}(B) \ll \sum_{\substack{z\ll B^{1/2}\\ z\;\textrm{sqrf}}} \sum_{\substack{T,S\\R_1,R_2\\\eqref{diadicheightconditions2}}} N_{2,\textrm{thin}}^{f}(T,R_1) N_{2,\textrm{thin}}^{g}(S,R_2).
\end{equation}
We now note that $N_{\textrm{thin}}^{f}(T,R_1)$ and $N_{\textrm{thin}}^{g}(S,R_2)$ count primitive solutions to the integral ternary quadratic forms
\begin{equation}\label{ternaryquadraticformsforthinsets}
f(u_1,u_2) = zy_1^2\;\text{and}\;g(v_1,v_2) = zy_2^2
\end{equation}
within the relevant boxes. Furthermore, each of these ternary quadratic forms is non-singular, since $f(u_1,u_2)$ and $g(v_1,v_2)$ do not split as double lines. We may therefore bound these quantities by applying Lemma \ref{BHBbound}. For the quadratic forms in \eqref{ternaryquadraticformsforthinsets} we have,
\[
D_0 \ll 1,\; z\ll D\ll z,\;\text{and}\;\tau(z)\ll\tau(D)\ll\tau(z).
\]
where the constants may depend on $f$ or $g$ as appropriate. From this it follows that, for any $\epsilon>0$,
\[
N_{\textrm{thin}}^{f}(T,R_1) \ll \left(1+\left(\frac{T^2R_1}{z}\right)^{1/3}\right)\tau(z)\;\text{and}\;N_{\textrm{thin}}^{g}(S,R_2) \ll \left(1+\left(\frac{S^2R_2}{z}\right)^{1/3}\right)\tau(z).
\]
Thus,
\begin{align*}
N_{\textrm{thin}}(B) &\ll \sum_{\substack{z\ll B^{1/2}\\ z\;\textrm{sqrf}}} \tau(z)^2\sum_{\substack{T,S\\R_1,R_2\\\eqref{diadicheightconditions2}}} \left(1+\left(\frac{T^2R_1}{z}\right)^{1/3}+\left(\frac{S^2R_2}{z}\right)^{1/3}+\left(\frac{T^2S^2R_1R_2}{z^2}\right)^{1/3}\right)\\
&\ll \sum_{\substack{z\ll B^{1/2}\\ z\;\textrm{sqrf}}} \tau(z)^2\sum_{\substack{T,S\\R_1,R_2\\\eqref{diadicheightconditions2}}} \left(1+\left(\frac{T^2S^2R_1R_2}{z}\right)^{1/3}\right).
\end{align*}
We now use the conditions \eqref{diadicheightconditions2} and the fact that there are $O(\log B)$ possible values for each of the $T,S,R_1$ and $R_2$. Then,
\[
N_{\textrm{thin}}(B) \ll (\log B)^6\sum_{\substack{z\ll B^{1/2}\\ z\;\textrm{sqrf}}} \tau(z)^2\left(1+\frac{B^{1/2}}{z^{2/3}}\right) \ll (B^{1/2}+B^{2/3})(\log B)^9.
\]


\begin{thebibliography}{99}
\bibitem{BH-B3} T. D. Browning, D. R. Heath-Brown, {\em Counting rational points on hypersurfaces},
Journal reine angew. Math. {\bf 584} pp.83-115, (2005).

\bibitem{BH-B2} \bysame, {\em Counting rational points on quadric surfaces},
Discrete Analysis, pp.15-29, (2018).

\bibitem{BL} T. D. Browning, D. Loughran, {\em Sieving rational points on varieties}, Trans. Amer. Math. Soc. {\bf 371}(8), pp.5757–5785, (2019).

\bibitem{BLS} T. D. Browning, J. Lyczak, R. Sarapin, {\em Local solubility for a family of quadrics over a split quadric surface},
Involve\ {\bf 16}(2), pp.331-342, (2023), \url{https://doi.org/10.2140/involve.2023.16.331}.

\bibitem{BLSmeets} T. D. Browning, J. Lyczak, A. Smeets, {\em Paucity of rational points on fibrations with multiple fibres}, arXiv, (2023), \url{https://arxiv.org/abs/2310.01135}.

\bibitem{ManinandCo.} J. Franke, Y.I. Manin and Y. Tschinkel, {\em Rational points of bounded height on Fano varieties}, Invent. Math. {\bf 95}, pp.421–435, (1989).

\bibitem{Cribro} J. Friedlander and H. Iwaniec, {\em Opera de cribro}, Amer. Math. Soc. Colloq. Publ. {\bf 57}, Amer. Math. Soc., Providence, RI, (2010).

\bibitem{Huxely} M. N. Huxley, {\em The large sieve inequality for algebraic number fields}, Mathematika {\bf 15}, pp.178-187, (1968).

\bibitem{KowalskiLargeSieve} E. Kowalski, {\em The large sieve and its applications}, Cambridge University Press, (2008).

\bibitem{L} D. Loughran, {\em The number of varieties in a family which contain a rational point}, J. Euro. Math. Soc., {\bf 20}(10), pp.2539–2588,  (2018). 

\bibitem{LRS} D. Loughran, N. Rome, E. Sofos, {\em The leading constant for rational points in families}, arXiv, (2022), \url{https://arxiv.org/abs/2210.13559}.

\bibitem{LS} D. Loughran, A. Smeets, {\em Fibrations with few rational points}, Geom. Func. Anal. {\bf 26}(5), pp.1449–1482, (2016).

\bibitem{LTbT} D. Loughran, R. Takloo-Bighash, S. Tanimoto, {\em Zero-loci of Brauer group elements on semi-simple algebraic groups}, Journal de l’Institut de Mathématiques de Jussieu, {\bf 19}(5), pp.1467-1507,  (2020).

\bibitem{Peyre} E. Peyre, {\em Libert\'{e} et accumulation}, Documenta Math. {\bf 25}, pp. 1615–1659, (2017).

\bibitem{SerreConics} J-P. Serre, {\em Sp\'{e}cialisation des \'{e}l\'{e}ments de $\mathrm{Br}_2(\Q(T_2,\ldots,T_n))$}, C. R. Acad. Sci. Paris S\'{e}r. I Math. {\bf 311}(7), pp.397-402, (1990).

\bibitem{Me} C. Wilson, {\em Asymptotics for local solubility of diagonal quadrics over a split quadric surface}
arXiv, (2024), \url{https://arxiv.org/abs/2404.11489}.


\end{thebibliography}
\end{document}